\title{$L^q$-spectra of measures on planar non-conformal attractors}
\newtheorem{thm}{Theorem}[section]
\newtheorem{cor}[thm]{Corollary}
\newtheorem{lem}[thm]{Lemma}
\theoremstyle{definition}
\newtheorem{defn}[thm]{Definition}
\numberwithin{equation}{section}
\date{}
\author{Kenneth J. Falconer, Jonathan M. Fraser \& Lawrence D. Lee }
\begin{document}
\maketitle
\begin{abstract} 
We study the $L^q$-spectrum of measures in the plane generated by certain nonlinear  maps. In particular we consider attractors of iterated function systems consisting of maps whose components are $C^{1+\alpha}$ and for which the Jacobian is a lower triangular matrix at every point subject to a natural domination condition on the entries. We calculate the $L^q$-spectrum of Bernoulli measures supported on such sets by using an appropriately defined analogue of the singular value function and an appropriate pressure function.

\emph{Mathematics Subject Classification} 2020: primary: 28A80, 37C45, secondary: 15A18.

\emph{Key words and phrases}: $L^q$-spectrum, generalised $q$-dimensions, non-conformal attractor,  modified singular value function, self-affine measure.
\end{abstract}

\section{Introduction}

The study of fractals generated by iterated function systems (IFSs) consisting of nonlinear maps, which can often be identified with repellers of corresponding dynamical systems, has a rich history. In 1994 Falconer  \cite{FalconerBD} calculated the dimension of mixing repellers for non-conformal mappings. To do this he applied techniques from thermodynamic formalism, in particular developing a subadditive version of the theory and also a ``bounded distortion'' principle. Further work on nonlinear IFSs was done by Hu who in 1996 calculated the box and Hausdorff dimensions of invariant sets of expanding $C^2$ maps \cite{Hu}. More recently Cao, Pesin and Zhao \cite{Pesin} studied the Hausdorff dimension of non-conformal repellers corresponding to $C^{1+\alpha}$ maps. By studying certain subadditive and superadditive pressures   they were able to obtain bounds for the Hausdorff dimension of repellers.

Other notable work in this area was done in 2007 by Manning and Simon  \cite{Manning} who investigated the subadditive pressure of nonlinear maps developed by Falconer applied to nonlinear maps and considered cases where bounded distortion does not hold. The work of Falconer as well as that of Manning and Simon and also Miao \cite{Miao} was generalised by Barany \cite{Barany} who used the subadditive pressure to calculate the Hausdorff dimension of fractals generated by IFSs whose maps have triangular Jacobians. Other authors to have considered IFSs generated by triangular mappings include Kolossv\'ary and Simon \cite{Istvan}. In particular they looked at a family of planar self-affine carpets with overlaps generated by lower triangular matrices and considered whether dimension drop occurs.

In terms of multifractal analysis Falconer studied the $L^q$-spectrum of self-affine measures \cite{Falconerlq1} and almost self-affine measures \cite{Falconerlq2}. In the case of self-affine measures he was able to establish a generic formula in the region $1<q\leq 2$ in terms of a subadditive pressure expession. Barral and Feng \cite{Barral} then generalised this in certain cases to calculate the $L^q$-spectrum for a wider range of $q$ and were also able to verify the multifractal formalism in some cases. For results on the $L^q$-spectrum of  measures on self-affine carpets, see Feng and Wang  \cite{Feng} and Fraser \cite{Fraser}.

In this paper we calculate the $L^q$-spectra of Bernoulli measures in the plane supported on sets generated by IFSs consisting of $C^{1+\alpha}$ maps whose Jacobian matrices are lower triangular. Our approach is based on setting up certain `almost-additive' pressure functionals. As a corollary we calculate the box dimension of the supports of these measures. Our results on $L^q$-dimensions are new, even in the (non-diagonal) self-affine case.

Standard background on iterated function systems may be found, for example, in \cite{Falconer, Hut}. We introduce further definitions, in particular \textit{nonlinear attractors} and \textit{nonlinear measures} which have a particular meaning in this paper as shorthand for the types of non-conformal attractors and measures we consider.
\vspace{5mm}

\begin{defn}[Nonlinear attractor]\label{ifsdef}
Let $\mathcal{I}$ be a finite index set with $|\mathcal{I}|\geq 2$ and let $\lbrace S_{i}\rbrace_{i\in\mathcal{I}}$ be an IFS consisting of contractions on $[0, 1]^2$. Suppose also that each $S_i:[0, 1]^2\rightarrow [0, 1]^2$ is of the form $S_i(a_1, a_2)= (f_i(a_1), g_i(a_1, a_2))$, where the $f_i$ and $g_i$ are $C^{1+\alpha}$ contractions ($0<\alpha \leq 1$) on $[0, 1]$ and $[0, 1]^2$ respectively, that is their derivatives satisfy H\"{o}lder conditions of exponent $\alpha$. (We use one-sided derivatives on the boundary of $[0,1]^2$.) By Hutchinson's theorem \cite{Hut} there is a unique non-empty, compact set $F$ satisfying
\begin{equation*}
F=\bigcup_{i\in\mathcal{I}}S_{i}(F)
\end{equation*}
which for the purposes of this paper we call the \emph{nonlinear attractor} associated to $\lbrace S_{i}\rbrace_{i\in\mathcal{I}}$.
\end{defn}

\vspace{5mm}

We are interested in the natural Bernoulli measures supported on nonlinear attractors $F$, see \cite{Falconer Techniques, Hut}.

\vspace{5mm}

\begin{defn}[Nonlinear measure]\label{probs}
Let $F$ be a nonlinear attractor given by  $\lbrace S_{i}\rbrace_{i\in\mathcal{I}}$ on $[0, 1]^2$, and  let $\lbrace p_{i}\rbrace_{i\in\mathcal{I}}$ be a probability vector with each $p_{i}\in (0, 1)$. Then there is a unique Borel probability measure $\mu$ supported on $F$ which satisfies 
\[
\mu=\sum_{i\in \mathcal{I}}p_{i}\ \mu  \circ  S_{i}^{-1}
\]
which we call the \emph{nonlinear measure} associated to $\lbrace S_{i}\rbrace_{i\in\mathcal{I}}$ and  $\lbrace p_{i}\rbrace_{i\in\mathcal{I}}$.
\end{defn}

Our aim is to calculate the \textit{$L^q$-spectra} of these measures.
Let $\delta>0$ and write $\mathcal{D}_{\delta}$ to denote the set of closed cubes in the $\delta$-mesh on $\mathbb{R}^n$ that have positive $\mu$-measure. Write
\begin{equation}\label{momsum}
\mathcal{D}^q_{\delta}(\mu)=\sum_{Q\in\mathcal{D}_{\delta}}\mu(Q)^q.
\end{equation}

\vspace{5mm}

\begin{defn}\label{lq}
If $\mu$ is a compactly supported Borel probability measure on $\mathbb{R}^n$ then for $q\geq 0$ the  \textit{upper} and  \textit{lower} $L^q$-\textit{spectrum} of $\mu$ are defined to be 
\begin{equation}\label{lq1}
\overline{\tau}_{\mu}(q)= \overline{\lim}_{\delta\rightarrow 0}\frac{\log \mathcal{D}^q_{\delta}(\mu)}{-\log\delta}
\end{equation}
and
\begin{equation}\label{lq2}
\underline{\tau}_{\mu}(q)= \underline{\lim}_{\delta\rightarrow 0}\frac{\log \mathcal{D}^q_{\delta}(\mu)}{-\log\delta}
\end{equation}
respectively. If these values coincide then define the $L^q$-\textit{spectrum} of $\mu$, denoted by $\tau_{\mu}(q)$, to be their common value.
\end{defn}

The $L^q$-spectrum can be thought of as an analogue of box-counting dimension for measures; indeed the upper and lower box dimensions of the support of $\mu$ are easily seen to be given by $\overline{\tau}_{\mu}(0)$ and $\underline{\tau}_{\mu}(0)$ respectively. Note that $\tau_{\mu}(1)=0$ (as $\mu$ is a probability measure) and that $\tau_{\mu}$ is decreasing in $q$. Furthermore the $L^q$-spectrum is central in multifractal analysis: in certain key cases  the fine multifractal spectrum of $\mu$ can be obtained by taking the Legendre transform of $\tau_{\mu}$ in which case we say that the multifractal formalism holds (see for instance \cite{Falconer, Olsen}). Another useful property of the $L^q$-spectrum is that if it is differentiable at $1$ then the Hausdorff dimension of the measure $\mu$ is given by $\dim _{H}\mu=-\tau_{\mu}^{\prime}(1)$ \cite{Ngai}.

For  our calculations of $L^q$-spectra for nonlinear measures we require the following separation condition for the IFS.

\vspace{5mm}

\begin{defn}[Rectangular open set condition]\label{ROSC}
An IFS  on $\mathbb{R}^2$ satisfies the \textit{rectangular open set condition} (ROSC) if  $\lbrace S_{i}((0, 1)^2)\rbrace_{i\in\mathcal{I}}$ are pairwise disjoint subsets of the open unit square $(0, 1)^2$.
\end{defn}

Fraser \cite{Fraser} calculated the $L^q$-spectrum $\tau_{\mu}(q)$ of a class of \textit{self-affine} measures in the plane. We broadly follow his approach although there  are several technical challenges which arise due to the nonlinearity, as well as the maps giving rise to non-diagonal Jacobians.

Our main result Theorem \ref{Main Theorem} requires some more assumptions and technical details, in particular that the  $\lbrace S_{i}\rbrace_{i\in\mathcal{I}}$ contract more in the vertical direction than in the horizontal direction. The theorem is stated fully in Section 3 but the essence of it is captured in the following version.

\vspace{5mm}

\begin{thm}\label{Heuristic}
Let $\mu$ be a nonlinear measure which satisfies a natural domination condition and the ROSC and let $q\geq 0$. Then there exists a function $\gamma:[0, \infty)\rightarrow\mathbb{R}$, defined in terms of the probability vector $\lbrace p_{i}\rbrace_{i\in\mathcal{I}}$, the singular values of Jacobian matrices of iterates of the $\lbrace S_{i}\rbrace_{i\in\mathcal{I}}$ and the $L^q$-spectrum of the projection of $\mu$ onto the $x$-axis, such that
\[
\tau_{\mu}(q)=\gamma(q).
\]
\end{thm}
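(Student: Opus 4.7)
The plan is to follow the now-standard strategy for computing $L^q$-spectra of non-conformal planar measures: replace the $\delta$-mesh in the definition of $\tau_{\mu}$ by a carefully chosen collection of \emph{approximate squares} adapted to the geometry of the iterates, then exploit a product-like decomposition together with a suitable almost-additive pressure. The dominated, lower-triangular structure of the Jacobians means that the image of $(0,1)^2$ under $S_{\mathbf{i}}:=S_{i_1}\circ\cdots\circ S_{i_n}$ is, up to uniformly bounded distortion, a rectangle whose sides are comparable to the two singular values $\alpha_n(\mathbf{i})\geq\beta_n(\mathbf{i})$ of $DS_{\mathbf{i}}$ at any reference point. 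At scale $\delta$ I would truncate each infinite word at the first level $n=n(\mathbf{i},\delta)$ for which $\beta_n(\mathbf{i})\leq\delta<\beta_{n-1}(\mathbf{i})$, and within each such cylinder slice the image horizontally into pieces of horizontal width comparable to $\beta_n(\mathbf{i})$, producing the approximate squares.

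The first technical step is a bounded distortion / almost-additivity lemma. Using that each $f_i$ and $g_i$ is $C^{1+\alpha}$ together with the domination hypothesis (vertical contraction dominates horizontal contraction), I would show
\[
\log\alpha_{n+m}(\mathbf{i}\mathbf{j})=\log\alpha_n(\mathbf{i})+\log\alpha_m(\mathbf{j})+O(1),
\]
and the analogous identity for $\beta$, uniformly in words. This is the replacement for the submultiplicativity that is automatic in the self-affine case; domination is essential because it prevents the two singular directions from mixing across iterates.

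Next I would rewrite the moment sum. Under the ROSC the approximate squares $R$ at scale $\delta$ cover $F$ with bounded multiplicity, so
\[
\mathcal{D}^q_{\delta}(\mu)\ \asymp\ \sum_{R}\mu(R)^q\ \asymp\ \sum_{\mathbf{i}\in\mathcal{I}^{\ast}_{\delta}}p_{\mathbf{i}}^{\,q}\sum_{I\subset\pi S_{\mathbf{i}}([0,1]^2)}\nu(I)^q,
\]
where $\mathcal{I}^{\ast}_{\delta}$ is the collection of stopped words, $p_{\mathbf{i}}=p_{i_1}\cdots p_{i_n}$, $\nu=\pi_{\ast}\mu$ is the projection onto the $x$-axis, and $I$ runs over projections of the approximate squares in a given cylinder. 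Bounded distortion for the $C^{1+\alpha}$ map $f_{\mathbf{i}}$ identifies the inner sum with a moment sum of $\nu$ at a scale $\asymp\beta_n(\mathbf{i})/\alpha_n(\mathbf{i})$ inside an interval of length $\asymp\alpha_n(\mathbf{i})$, and can therefore be controlled by the $L^q$-spectrum of $\nu$. After this substitution the outer sum becomes an almost-additive pressure in the probabilities and singular values, and defining $\gamma(q)$ as the critical exponent for this pressure identifies $\tau_{\mu}(q)$ with $\gamma(q)$.

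The main obstacle will be the last step: showing that the almost-additive pressure converges and that the upper and lower estimates on $\mathcal{D}^q_{\delta}(\mu)$ match. One has to establish a Feng-type quasi-multiplicativity / continuity for the modified pressure, handle the range $0\leq q<1$ where the moment sums are not concentrated on ``typical'' cylinders, and verify that $\tau_{\nu}(q)$ itself exists --- here $\nu$ is a Bernoulli measure on a self-conformal $C^{1+\alpha}$ attractor on the line, which should reduce to classical (additive) thermodynamic formalism. Making the bounded distortion quantitative enough to absorb the $O(1)$ errors across all words and scales simultaneously is the delicate point on which the rest of the argument depends.
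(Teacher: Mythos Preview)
Your outline is essentially the paper's strategy: a $\delta$-stopping on $\alpha_2$, bounded distortion from the $C^{1+\alpha}$ hypothesis and domination so that the singular values are comparable to the diagonal Jacobian entries and hence almost-multiplicative, then a comparison of $\mathcal{D}^q_{\delta}(\mu_{\mathfrak{i}})$ with a moment sum of $\nu=\pi_*\mu$ at scale $\asymp \delta/\alpha_1$, after which the pressure argument runs. The paper phrases the covering step via the measure decomposition $\mu=\sum_{\mathfrak{i}\in\mathcal{I}_\delta}\mu_{\mathfrak{i}}$ rather than explicit approximate squares, but this is cosmetic; ROSC is used exactly as you suggest to bound the multiplicity and close the inequality for the ``wrong'' range of $q$.

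One point does need correcting. You say the existence of $\tau_\nu(q)$ ``should reduce to classical (additive) thermodynamic formalism''. It does not: the projected IFS $\{f_i\}_{i\in\mathcal{I}}$ on the line typically has heavy overlaps (nothing in the hypotheses forces separation after projection), so the usual Gibbs/thermodynamic machinery is unavailable. The paper instead invokes the result of Peres--Solomyak that the $L^q$-spectrum of a self-conformal measure exists for all $q\geq 0$ regardless of separation; without this input the definition of the pressure (which has $\beta(q)=\tau_\nu(q)$ built into the exponent) would not make sense. Relatedly, you anticipate needing a ``Feng-type quasi-multiplicativity'' for the pressure; in fact domination gives genuine almost-multiplicativity $\Psi^{s,q}_{k+l}\asymp\Psi^{s,q}_k\Psi^{s,q}_l$ directly, because the singular values are uniformly comparable to $f_{\mathfrak{i},x}$ and $g_{\mathfrak{i},y}$, which are honestly multiplicative up to bounded distortion. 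So that obstacle is lighter than you fear, while the projected-spectrum step is heavier.
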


We set up the pressure formalism that enables us to define $\gamma$ in Section 3 and prove the theorem in Section 4.  A simple corollary is that if $\mu$ satisfies the ROSC then the box dimension of the support of $\mu$ is given by $\gamma(0)$.

\section{An Example}

Here we provide an example of a nonlinear IFS and corresponding nonlinear attractor generated by three maps. The maps are
\begin{align*}
&S_1(x, y)=\left(\frac{3x}{5}+\frac{3x^2}{40}, \ \frac{x^2}{12}+\frac{y}{6}\right),\\ 
&S_2(x, y)=\left(\frac{4x}{5}-\frac{4x^3}{30}+\frac{1}{3}, \ \frac{x^2}{10}+\frac{y}{4}+\frac{17}{50}\right),\\
&S_3(x, y)= \left(\frac{3x}{5}, \ \frac{x^2}{10}+\frac{y}{5}+\frac{y^3}{9}+\frac{26}{45}\right).                                  
\end{align*}
\begin{figure}[H]
  \centering
\includegraphics[width=0.8\textwidth]{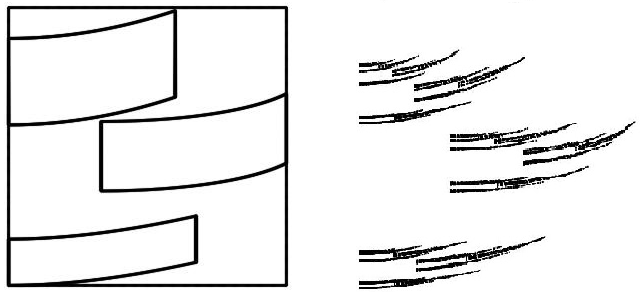}
\caption{The image of the unit square $[0, 1]^2$ under the maps $S_1, S_2$ and $S_3$ is shown on the left; the IFS satisfies the ROSC \eqref{ROSC}. On the right is the corresponding nonlinear attractor. } 
\end{figure}
These maps satisfy the conditions for Theorem \ref{Heuristic} and Theorem \ref{Main Theorem}. The ROSC and domination condition \eqref{Domination} (stated formally in Section 3) are easy to check.  Indeed using Maple software gives 
\begin{align*}
&\inf_{\textbf{a}\in[0, 1]^2}f_{1,x}(\textbf{a})=3/5>\sup_{\textbf{a}\in[0, 1]^2}g_{1,y}(\textbf{a})=1/6\geq\inf_{\textbf{a}\in[0, 1]^2}g_{1,y}(\textbf{a})=1/6\\
&\inf_{\textbf{a}\in[0, 1]^2}f_{2,x}(\textbf{a})=2/5>\sup_{\textbf{a}\in[0, 1]^2}g_{2,y}(\textbf{a})=1/4\geq\inf_{\textbf{a}\in[0, 1]^2}g_{2,y}(\textbf{a})=1/4\\
&\inf_{\textbf{a}\in[0, 1]^2}f_{3,x}(\textbf{a})=3/5>\sup_{\textbf{a}\in[0, 1]^2}g_{3,y}(\textbf{a})=8/15\geq\inf_{\textbf{a}\in[0, 1]^2}g_{3,y}(\textbf{a})=1/5
\end{align*}
with $d=1/6$, say.
Thus any nonlinear measures supported on the attractor of this IFS would fall under the class considered.

\section{A singular value function and pressure}

In \cite{Fraser} Fraser introduced a \textit{q-modified singular value function}. As he was dealing with self-affine measures he needed to consider the singular values of the linear part of each affine map in the IFS. In our nonlinear setting we shall instead consider singular values of Jacobian matrices.

Let $\lbrace S_{i}\rbrace_{i\in\mathcal{I}}$ be an iterated function system of the form in Definition \ref{ifsdef}. 
For $\textbf{a}=(a_1, a_2)\in [0, 1]^2$ and $i\in\mathcal{I}$ we denote the derivative of $S_{i}$ by $D_{\textbf{a}} S_i$. Note that as each $S_i$ is of the form $S_i(a_1, a_2)= (f_i(a_1), g_i(a_1, a_2))$ 
the Jacobian matrix of $D_{\textbf{a}} S_i$ is a lower triangular matrix. 
To simplify notation we will  write $S_i(\textbf{a})= (f_i(\textbf{a}), g_i(\textbf{a}))$, where $\textbf{a} = (a_1, a_2)$, even though $f$ does not depend on $a_2$. If we now write $f_x$ for the derivative of $f$ and $g_x$ and $g_y$ for the partial derivatives of $g$ then
\[
D_{\textbf{a}} S_i = \begin{pmatrix} 
f_{i,x}(\textbf{a}) & 0 \\
g_{i,x}(\textbf{a}) & g_{i,y}(\textbf{a})
\end{pmatrix}.
\] 
From now on we assume that the IFS satisfies the following \textit{domination condition}, which is our key technical assumption.

\vspace{5mm}

\begin{defn}[Domination Condition]
We say the IFS $\lbrace S_{i}\rbrace_{i\in\mathcal{I}}$ satisfies  the \textit{domination condition} if for each map $S_i$ the following inequalities on the derivatives  hold:
\begin{equation}\label{Domination} 
\inf_{{\bf a}\in[0, 1]^2}f_{i,x}({\bf a})>\sup_{{\bf a}\in[0, 1]^2}g_{i,y}({\bf a})\geq \inf_{{\bf a}\in[0, 1]^2}g_{i,y}({\bf a})\geq d,  
\end{equation}
where $d>0$.
\end{defn}
Let 
\begin{equation}\label{Gamma1}
\eta:=\sup_{i\in\mathcal{I}, \textbf{a},\textbf{b}\in[0, 1]^2}\left\{\frac{g_{i,y}(\textbf{a})}{f_{i,x}(\textbf{b})}\right\}<1
\end{equation}
using \eqref{Domination}. In the obvious way we will say that $\mu$ and $F$ satisfy the domination condition if their defining IFS does.

There is no requirement on $g_{i,x}$ to be positive; in particular since this allows $g_{i,x}(\textbf{a})=0$ for all $\textbf{a}$   the class of measures we consider includes self-affine measures supported on Bedford-McMullen carpets, as well as measures supported on attractors of nonlinear ``diagonal'' IFSs.

Let $\mathcal{I}^*=\bigcup_{k\geq 1}\mathcal{I}^k$ denote the set of all finite sequences with entries in $\mathcal{I}$. For $\mathfrak{i}=(i_{1},\dots, i_{k})\in\mathcal{I}^k$ let $S_{\mathfrak{i}} = S_{i_{1}\cdots i_{k}} := S_{i_{1}}\circ S_{i_{2}}\circ \cdots\circ S_{i_{k}}$ and let $p(\mathfrak{i})= p_{i_{1}}p_{i_{2}}\cdots p_{i_{k}}$. 

We write $0<c<1$ for the maximum contraction ratio of the $S_i$ so in particular
\begin{equation}\label{cont}
 |S_{i_{1}\cdots i_{k}}(\textbf{a}) - S_{i_{1}\cdots i_{k}}(\textbf{b})| \leq c^k|\textbf{a} - \textbf{b}| \qquad ((i_{1},\dots, i_{k})\in\mathcal{I}^k,\  \textbf{a}, \textbf{b}\in[0, 1]^2).
\end{equation}

By the chain rule the Jacobian of the composed maps $S_{\mathfrak{i}}$ must be lower triangular, so let $f_{\mathfrak{i},x}(\textbf{a}), g_{\mathfrak{i},x}(\textbf{a}), g_{\mathfrak{i},y}(\textbf{a})$ denote the entries of $D_{\textbf{a}} S_{\mathfrak{i}}$, that is
\begin{equation}\label{Der}
D_{\textbf{a}} S_{\mathfrak{i}} = \begin{pmatrix} 
f_{\mathfrak{i},x}(\textbf{a}) & 0 \\
g_{\mathfrak{i},x}(\textbf{a}) & g_{\mathfrak{i},y}(\textbf{a})
\end{pmatrix}.
\end{equation} 

We will show that the domination condition implies that these matrices satisfy a \textit{bounded distortion}  property which will be key in calculating the $L^q$-spectra.


For $x, y\in\mathbb{R}^+$ we write $x\lesssim y$ to mean that $x\leq C y$ for some absolute constant $C>0$. If we wish to emphasize that this constant depends on some other parameter, $\theta$ say, we write $x\lesssim_{\theta} y$. If both $x\lesssim y$ and $y\lesssim x$ we write $x\asymp y$. In this case we say that $x$ and $y$ are \textit{comparable}.

\vspace{5mm}



Using the chain rule the diagonal entries of \eqref{Der} can be written in terms of derivatives of the individual $f_i$ and $g_i$ as follows.
\begin{equation}\label{fgchain}
f_{\mathfrak{i},x}(\textbf{a})= \prod_{j=1}^k f_{i_j,x}(S_{i_{j+1}\cdots i_k}\textbf{a}), 
\quad g_{\mathfrak{i},y}(\textbf{a})= \prod_{j=1}^k g_{i_j,y}(S_{i_{j+1}\cdots i_k}\textbf{a}).
\end{equation}
(Here and elsewhere make the natural convention that $S_{i_{k+1}}S_{i_{k}}$ is the identity.)
Note that  from \eqref{Gamma1}, using these expansions, 
\begin{equation}\label{Gamma2}
\frac{g_{\mathfrak{i},y}(\textbf{a})}{f_{\mathfrak{i},x}(\textbf{b})}\ \leq\ \eta^k
\end{equation}
for all $\mathfrak{i}=(i_{1},\dots, i_{k})\in\mathcal{I}^k$ and all $\textbf{a}, \textbf{b}\in[0, 1]^2$. For the bottom left term direct expansion or induction gives 
\begin{equation}\label{gxsum}
g_{\mathfrak{i},x}(\textbf{a})= \sum_{j=1}^k G_j(\textbf{a})
\end{equation}
where, using the chain rule,
\begin{eqnarray}
G_j(\textbf{a})
&=&
 g_{i_1,y}(S_{i_{2}\cdots i_k}\textbf{a})\cdots g_{i_{j-1},y}(S_{i_{j}\cdots i_k}\textbf{a})
 g_{i_j,x}(S_{i_{j+1}\cdots i_k}\textbf{a})\nonumber\\ 
&& \hspace{3.5cm} \times f_{i_{j+1},x}(S_{i_{j+2}\cdots i_k}\textbf{a})\cdots f_{i_{k-1},x}(S_{i_k}\textbf{a})f_{i_k,x}(\textbf{a})\label{gxprod0}\\
&=&\bigg(\prod_{l=1}^{j-1}  g_{i_l,y}(S_{i_{l+1}\cdots i_k}\textbf{a})\bigg)  g_{i_j,x}(S_{i_{j+1}\cdots i_k}\textbf{a}) \bigg(\prod_{l=j+1}^{k} f_{i_l,x}(S_{i_{l+1}\cdots i_k}\textbf{a})\bigg)\label{gxprod}\\
&=& g_{i_1\cdots i_{j-1},y}(S_{i_{j}\cdots i_k}\textbf{a})g_{i_j,x}(S_{i_{j+1}\cdots i_k}\textbf{a})  f_{i_{j+1}\cdots i_k,x}(S_{i_{j+2}\cdots i_k}\textbf{a}).\label{gxprod2}
\end{eqnarray}

 The next two lemmas obtain estimates on the entries of \eqref{Der} that are uniform in $\mathfrak{i}$ and $\textbf{a}$.
\vspace{5mm}

\begin{lem}\label{New Lemma}
There exists a constant $R>0$ such that for all $\mathfrak{i}\in\mathcal{I}^*$ and all ${\bf a}, {\bf b}\in[0, 1]^2$,
\begin{equation}\label{ratiofandg}
R^{-1} \ \leq\ \frac{f_{\mathfrak{i},x}({\bf a})}{f_{\mathfrak{i},x}({\bf b})},\  \frac{g_{\mathfrak{i},y}({\bf a})}{g_{\mathfrak{i},y}({\bf b})}\ \leq \ R.
\end{equation}
\end{lem}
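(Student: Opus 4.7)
The plan is to use the multiplicative chain rule expansions \eqref{fgchain} together with the $C^{1+\alpha}$ regularity of the $f_i$ and $g_i$, the domination lower bound on $f_{i,x}$ and $g_{i,y}$, and the uniform contraction rate \eqref{cont}. This is a standard bounded distortion argument; the only subtlety is that one must handle the whole composed map $S_{i_{j+1}\cdots i_k}$ (which lands in $[0,1]^2$) rather than just the final coordinate.

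First I would pass to logarithms and write
\[
\log \frac{f_{\mathfrak{i},x}(\mathbf a)}{f_{\mathfrak{i},x}(\mathbf b)} \;=\; \sum_{j=1}^{k} \Big( \log f_{i_j,x}(S_{i_{j+1}\cdots i_k}\mathbf a) - \log f_{i_j,x}(S_{i_{j+1}\cdots i_k}\mathbf b)\Big),
\]
and similarly for $g_{\mathfrak{i},y}$. Since each $f_{i,x}$ satisfies $f_{i,x}\geq d$ by the domination condition \eqref{Domination}, the mean value theorem applied to $\log$ yields
\[
\bigl|\log f_{i_j,x}(\mathbf u)-\log f_{i_j,x}(\mathbf v)\bigr| \;\leq\; \tfrac{1}{d}\,\bigl|f_{i_j,x}(\mathbf u)-f_{i_j,x}(\mathbf v)\bigr|.
\]
By the $C^{1+\alpha}$ assumption on the $f_i$ there is a uniform H\"older constant $H$ (take the maximum over the finite index set $\mathcal I$) so that $|f_{i,x}(\mathbf u)-f_{i,x}(\mathbf v)|\leq H|\mathbf u-\mathbf v|^\alpha$ on $[0,1]^2$.

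Next I would invoke the contraction estimate \eqref{cont} with $\mathbf u=S_{i_{j+1}\cdots i_k}\mathbf a$ and $\mathbf v=S_{i_{j+1}\cdots i_k}\mathbf b$ to obtain $|\mathbf u-\mathbf v|\leq c^{k-j}|\mathbf a-\mathbf b|\leq \sqrt 2\,c^{k-j}$. Summing over $j=1,\dots,k$ gives
\[
\Bigl|\log \tfrac{f_{\mathfrak{i},x}(\mathbf a)}{f_{\mathfrak{i},x}(\mathbf b)}\Bigr| \;\leq\; \frac{H\,2^{\alpha/2}}{d}\sum_{j=1}^{k} c^{\alpha(k-j)} \;\leq\; \frac{H\,2^{\alpha/2}}{d(1-c^\alpha)},
\]
a bound independent of $k$, $\mathfrak i$ and $\mathbf a,\mathbf b$. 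Exponentiating, the claimed two-sided estimate holds with $R$ equal to the exponential of this constant. The argument for $g_{\mathfrak i,y}$ is identical, using the lower bound $g_{i,y}\geq d$ straight from \eqref{Domination} and the uniform H\"older constant for the $g_{i,y}$.

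The main (mild) obstacle is simply verifying that the geometric sum converges uniformly in $k$; the domination condition is not needed in its full strength here, only the uniform positive lower bounds $f_{i,x},g_{i,y}\geq d$ together with the $C^{1+\alpha}$ regularity and the uniform contraction rate $c<1$. The stronger content of domination, namely $\inf f_{i,x}>\sup g_{i,y}$, will be used later (for instance via \eqref{Gamma2}) but plays no role in this distortion lemma.
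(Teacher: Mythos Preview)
Your proof is correct and follows essentially the same bounded distortion argument as the paper: chain-rule factorisation via \eqref{fgchain}, the H\"older estimate on the first partials together with the lower bound $d$ from \eqref{Domination}, the contraction estimate \eqref{cont}, and a geometric summation. The only cosmetic difference is that you take logarithms at the outset and apply the mean value theorem to $\log$, whereas the paper writes each factor as $1+\epsilon_j$ and uses $1+x\le e^x$ before summing; the resulting constant $R$ is the same up to the harmless diameter bound ($\sqrt{2}$ versus $2$).
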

\begin{proof}
Note that since each $f_{i_j}$ is a $C^{1+\alpha}$ map there is a number $B$ such that
$$
|f_{i,x}(\textbf{a}^{\prime})-f_{i,x}(\textbf{b}^{\prime})|\leq B |\textbf{a}^{\prime}-\textbf{b}^{\prime}|^\alpha
$$
for all $i\in\mathcal{I}$ and all  $\textbf{a}^{\prime}, \textbf{b}^{\prime}\in[0,1]^2$.
For $\mathfrak{i}=(i_{1},\dots, i_{k})\in\mathcal{I}^k$ and $\textbf{a}, \textbf{b}\in[0, 1]^2$,  identity \eqref{fgchain} gives
\begin{eqnarray}
\frac{f_{\mathfrak{i},x}(\textbf{a})}{f_{\mathfrak{i},x}(\textbf{b})}
&=& 
\prod_{j=1}^k \frac{f_{i_j,x}(S_{i_{j+1}\cdots i_k}\textbf{a})}{f_{i_j,x}(S_{i_{j+1}\cdots i_k}\textbf{b})}\nonumber\\
&=& 
\prod_{j=1}^k \bigg( 1+ \frac{f_{i_j,x}(S_{i_{j+1}\cdots i_k}\textbf{a})- f_{i_j,x}(S_{i_{j+1}\cdots i_k}\textbf{b})}
{f_{i_j,x}(S_{i_{j+1}\cdots i_k}\textbf{b})}\bigg)\nonumber \\
&\leq& 
\prod_{j=1}^k \bigg( 1+ \frac{B|S_{i_{j+1}\cdots i_k}\textbf{a}- S_{i_{j+1}\cdots i_k}\textbf{b}|^\alpha}
{d}\bigg)\label{fix1}\\
&\leq& 
\prod_{j=1}^k \bigg( 1+ \frac{B\, c^{(k-j)\alpha} \, |\textbf{a}-\textbf{b}|^\alpha}
{d}\bigg)\nonumber\\
&\leq& 
\prod_{j=1}^k \exp\Big( \frac{2^\alpha B }{d} c^{(k-j)\alpha} \Big)\nonumber\\
&\leq& 
\exp\sum_{j=1}^k \Big( \frac{2^\alpha B}{d} c^{(k-j)\alpha} \Big)\nonumber\\
&\leq& 
\exp\Big( \frac{2^\alpha B}{d(1-c^\alpha)}\Big)\nonumber
\end{eqnarray}
using that $|\textbf{a}-\textbf{b}|\leq 2$. 
Setting $R = \exp({2^\alpha B/d(1-c^\alpha)})$ gives \eqref{ratiofandg} for $f_{\mathfrak{i},x}$,
with the left-hand estimate obtained by reversing the roles of $\textbf{a}$ and $\textbf{b}$. A similar argument using \eqref{fgchain} applies for  $g_{\mathfrak{i},y}$.
\end{proof}

We turn to the bottom left entries $g_{\mathfrak{i},x}$.
\bigskip

\begin{lem}\label{Off-DiagonalB}
There exists $C>0$ such that for all $\mathfrak{i}\in\mathcal{I}^*$ and all ${\bf a},{\bf b}\in [0, 1]^2$
\begin{equation}\label{ratio1}
\bigg| \frac{g_{\mathfrak{i},x}({\bf a})}{ f_{\mathfrak{i},x}({\bf b})}\bigg| \leq C. 
\end{equation}
\end{lem}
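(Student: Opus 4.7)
The plan is to bound $g_{\mathfrak{i},x}(\mathbf{a})$ term-by-term using the decomposition \eqref{gxsum}--\eqref{gxprod2}, and to match each summand $G_j(\mathbf{a})$ against a corresponding factorisation of $f_{\mathfrak{i},x}(\mathbf{b})$ obtained from the chain rule. The strategy is essentially: one summand $G_j$ contains a block $g_{i_1\cdots i_{j-1},y}$ that decays geometrically relative to the block $f_{i_1\cdots i_{j-1},x}$ by the domination bound \eqref{Gamma2}, a single middle factor $g_{i_j,x}$ that can be absorbed by the uniform lower bound on $f_{i_j,x}$ from \eqref{Domination}, and a tail block $f_{i_{j+1}\cdots i_k,x}$ that matches its counterpart up to bounded distortion (Lemma \ref{New Lemma}). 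Summing a geometric series in $j$ then gives the required uniform bound.

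More precisely, fix $\mathfrak{i}=(i_1,\dots,i_k)\in\mathcal{I}^k$ and $\mathbf{a},\mathbf{b}\in[0,1]^2$. By the chain rule applied to $f_{\mathfrak{i},x}(\mathbf{b})=\prod_{l=1}^k f_{i_l,x}(S_{i_{l+1}\cdots i_k}\mathbf{b})$, splitting the product at $l<j$, $l=j$, $l>j$, one obtains
\[
f_{\mathfrak{i},x}(\mathbf{b})= f_{i_1\cdots i_{j-1},x}(S_{i_{j}\cdots i_k}\mathbf{b})\,f_{i_j,x}(S_{i_{j+1}\cdots i_k}\mathbf{b})\,f_{i_{j+1}\cdots i_k,x}(S_{i_{j+2}\cdots i_k}\mathbf{b}).
\]
Comparing this with \eqref{gxprod2}, the ratio $G_j(\mathbf{a})/f_{\mathfrak{i},x}(\mathbf{b})$ factors as the product of three natural ratios.

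I would then bound each factor separately. The first, $g_{i_1\cdots i_{j-1},y}(S_{i_j\cdots i_k}\mathbf{a})/f_{i_1\cdots i_{j-1},x}(S_{i_j\cdots i_k}\mathbf{b})$, is at most $\eta^{j-1}$ by \eqref{Gamma2} (with the convention that this factor equals $1$ when $j=1$). The middle factor $g_{i_j,x}(S_{i_{j+1}\cdots i_k}\mathbf{a})/f_{i_j,x}(S_{i_{j+1}\cdots i_k}\mathbf{b})$ is bounded in absolute value by $M/d$, where $M:=\sup_{i\in\mathcal{I},\,\mathbf{a}\in[0,1]^2}|g_{i,x}(\mathbf{a})|<\infty$ (finite because the $g_i$ are $C^{1+\alpha}$ on a compact set) and $d$ is the lower bound from \eqref{Domination}. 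The tail factor $f_{i_{j+1}\cdots i_k,x}(S_{i_{j+2}\cdots i_k}\mathbf{a})/f_{i_{j+1}\cdots i_k,x}(S_{i_{j+2}\cdots i_k}\mathbf{b})$ is at most $R$ by Lemma \ref{New Lemma} (with the convention that this factor is $1$ when $j=k$). Hence
\[
\Big|\frac{G_j(\mathbf{a})}{f_{\mathfrak{i},x}(\mathbf{b})}\Big|\ \leq\ \eta^{j-1}\cdot\frac{M}{d}\cdot R,
\]
and summing over $j=1,\ldots,k$ via \eqref{gxsum} yields
\[
\Big|\frac{g_{\mathfrak{i},x}(\mathbf{a})}{f_{\mathfrak{i},x}(\mathbf{b})}\Big|\ \leq\ \frac{RM}{d}\sum_{j=1}^{k}\eta^{j-1}\ \leq\ \frac{RM}{d(1-\eta)}\ =:\ C,
\]
independent of $k$, $\mathfrak{i}$, $\mathbf{a}$, $\mathbf{b}$.

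I do not expect any serious obstacle here: the domination condition is exactly what makes the three-part factorisation align cleanly, and the geometric decay $\eta^{j-1}$ compensates for the unbounded number of summands. The only point requiring minor care is the bookkeeping of the empty-product conventions at the endpoints $j=1$ and $j=k$, and checking that $G_j$ in \eqref{gxprod2} truly matches the block decomposition of $f_{\mathfrak{i},x}(\mathbf{b})$ in the indices.
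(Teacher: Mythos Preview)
Your proof is correct and follows essentially the same route as the paper: decompose $g_{\mathfrak{i},x}(\mathbf{a})=\sum_j G_j(\mathbf{a})$ via \eqref{gxsum}--\eqref{gxprod2}, factor each $G_j(\mathbf{a})/f_{\mathfrak{i},x}(\mathbf{b})$ into a head block bounded by $\eta^{j-1}$ via \eqref{Gamma2}, a single middle factor bounded uniformly, and a tail block bounded by $R$ via Lemma~\ref{New Lemma}, then sum the geometric series. The only cosmetic difference is that the paper works with the expanded product form \eqref{gxprod} term-by-term rather than the compact form \eqref{gxprod2}, and bounds the middle factor by $\eta$ (using \eqref{Gamma1}) rather than your $M/d$; your justification of that middle bound is in fact more transparent, since $\eta$ as defined involves $g_{i,y}$ rather than $g_{i,x}$.
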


\begin{proof}
Let $\mathfrak{i}=(i_{1},\dots, i_{k})\in\mathcal{I}^k$ and $\textbf{a}, \textbf{b}\in[0, 1]^2$. Then for $1\leq j\leq k$ identities \eqref{fgchain} and  \eqref{gxprod} give
\begin{eqnarray*}
\bigg|\frac{G_j(\textbf{a})}{f_{\mathfrak{i},x}(\textbf{b})}\bigg|
&=& 
\bigg|\bigg( \prod_{l=1}^{j-1} \frac{g_{i_l,y}(S_{i_{l+1}\cdots i_k}\textbf{a})}{f_{i_l,x}(S_{i_{l+1}\cdots i_k}\textbf{b})}\bigg) 
\frac{g_{i_j,x}(S_{i_{j+1}\cdots i_k}\textbf{a})}{f_{i_j,x}(S_{i_{j+1}\cdots i_k}\textbf{b})} 
\bigg(\prod_{l=j+1}^{k} \frac{f_{i_l,x}(S_{i_{l+1}\cdots i_k}\textbf{a})}{f_{i_l,x}(S_{i_{l+1}\cdots i_k}\textbf{b})}\bigg)\bigg|\\
&=& 
\bigg( \prod_{l=1}^{j-1}\bigg|  \frac{g_{i_l,y}(S_{i_{l+1}\cdots i_k}\textbf{a})}{f_{i_l,x}(S_{i_{l+1}\cdots i_k}\textbf{b})}\bigg|\bigg)  
\bigg| \frac{g_{i_j,x}(S_{i_{j+1}\cdots i_k}\textbf{a})}{f_{i_j,x}(S_{i_{j+1}\cdots i_k}\textbf{b})}\bigg| 
\bigg|\frac{f_{i_{j+1}\cdots i_k,x}(\textbf{a})}{f_{i_{j+1}\cdots i_k,x}(\textbf{b})}\bigg|\\
&\leq & \eta^{j-1} \eta R,
\end{eqnarray*}
using  \eqref{Gamma1} and where $R$ is as in \eqref{ratiofandg}.
Hence by \eqref{gxsum}
$$\bigg| \frac{g_{\mathfrak{i},x}(\textbf{a})}{ f_{\mathfrak{i},x}(\textbf{b})}\bigg|
\ =\ \bigg| \frac{\sum_{j=1}^k G_j(\textbf{a}) }{ f_{\mathfrak{i},x}(\textbf{b})}\bigg|
\ \leq	 \  \sum_{j=1}^k\bigg|\frac{G_j(\textbf{a})}{ f_{\mathfrak{i},x}(\textbf{b})}\bigg| \ \leq \  \sum_{j=1}^k R\,\eta^j 
\ <\ \frac{R\,\eta }{1-\eta},$$
giving \eqref{ratio1} with $C= R\eta/(1-\eta)$.
\end{proof}
\vspace{5mm}

Recall that the singular values of an $n\times n$ matrix $A$  are defined to be the eigenvalues of $A^TA$. For $\textbf{a}=(a_1, a_2)\in [0, 1]^2$ write $\alpha_1(D_{\textbf{a}} S_{\mathfrak{i}})\geq \alpha_2(D_{\textbf{a}} S_{\mathfrak{i}})$ for the singular values of $D_{\textbf{a}} S_{\mathfrak{i}}$. 
\medskip

\begin{lem}\label{Singular Value Bound}
The singular values of the Jacobian matrices $D_{{\bf a}}S_{\mathfrak{i}}$ satisfy
\begin{equation}\label{alpha1}
\alpha_1(D_{{\bf a}}S_{\mathfrak{i}})\asymp f_{\mathfrak{i}, x}({\bf a})
\end{equation}
and
\begin{equation}\label{alpha2}
\alpha_2(D_{{\bf a}}S_{\mathfrak{i}})\asymp g_{\mathfrak{i}, y}({\bf a})
\end{equation}
for all ${\bf a}\in[0, 1]^2$ and $\mathfrak{i}\in\mathcal{I}^*$.
\end{lem}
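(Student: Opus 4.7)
The plan is to exploit the simple structure of a $2\times 2$ lower triangular matrix, combining the determinantal identity $\alpha_1\alpha_2=|\det|$ with elementary operator/Frobenius norm bounds, and then feed in the estimates from Lemmas \ref{New Lemma} and \ref{Off-DiagonalB}.

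First, I would record the identity
\[
\alpha_1(D_{\textbf{a}} S_{\mathfrak{i}})\,\alpha_2(D_{\textbf{a}} S_{\mathfrak{i}})=|\det D_{\textbf{a}} S_{\mathfrak{i}}|=f_{\mathfrak{i},x}(\textbf{a})\,g_{\mathfrak{i},y}(\textbf{a}),
\]
which holds because the matrix is triangular and both diagonal entries are positive. So once $\alpha_1 \asymp f_{\mathfrak{i},x}$ is established, the estimate for $\alpha_2$ follows immediately by dividing, with comparability constants depending only on those for $\alpha_1$.

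Next, I would bound $\alpha_1$ from below by testing against the vector $e_1=(1,0)^T$: since $\alpha_1 = \|D_{\textbf{a}} S_{\mathfrak{i}}\|$ (operator $2$-norm) and
\[
\|D_{\textbf{a}} S_{\mathfrak{i}}\,e_1\|=\sqrt{f_{\mathfrak{i},x}(\textbf{a})^2+g_{\mathfrak{i},x}(\textbf{a})^2}\geq f_{\mathfrak{i},x}(\textbf{a}),
\]
we get $\alpha_1(D_{\textbf{a}} S_{\mathfrak{i}})\geq f_{\mathfrak{i},x}(\textbf{a})$. For the upper bound, use the Frobenius norm inequality $\alpha_1\leq \|D_{\textbf{a}} S_{\mathfrak{i}}\|_F$, which gives
\[
\alpha_1(D_{\textbf{a}} S_{\mathfrak{i}})\leq \sqrt{f_{\mathfrak{i},x}(\textbf{a})^2+g_{\mathfrak{i},x}(\textbf{a})^2+g_{\mathfrak{i},y}(\textbf{a})^2}.
\]
Now apply Lemma \ref{Off-DiagonalB} (with $\textbf{b}=\textbf{a}$) to get $|g_{\mathfrak{i},x}(\textbf{a})|\leq C f_{\mathfrak{i},x}(\textbf{a})$, and use $g_{\mathfrak{i},y}(\textbf{a})\leq \eta^k f_{\mathfrak{i},x}(\textbf{a})\leq f_{\mathfrak{i},x}(\textbf{a})$ from \eqref{Gamma2}. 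This yields $\alpha_1(D_{\textbf{a}} S_{\mathfrak{i}})\leq \sqrt{2+C^2}\,f_{\mathfrak{i},x}(\textbf{a})$, completing \eqref{alpha1}.

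Finally, dividing the determinantal identity by the just-established $\alpha_1\asymp f_{\mathfrak{i},x}(\textbf{a})$ gives
\[
\alpha_2(D_{\textbf{a}} S_{\mathfrak{i}})=\frac{f_{\mathfrak{i},x}(\textbf{a})\,g_{\mathfrak{i},y}(\textbf{a})}{\alpha_1(D_{\textbf{a}} S_{\mathfrak{i}})}\asymp g_{\mathfrak{i},y}(\textbf{a}),
\]
with constants depending only on $C$ from Lemma \ref{Off-DiagonalB}. I do not anticipate any real obstacle: the only subtlety is making sure the bound on $|g_{\mathfrak{i},x}|/f_{\mathfrak{i},x}$ is used at the \emph{same} point $\textbf{a}$, which is exactly the form of Lemma \ref{Off-DiagonalB} when one takes $\textbf{b}=\textbf{a}$. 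The whole argument is uniform in $\mathfrak{i}\in\mathcal{I}^*$ and $\textbf{a}\in[0,1]^2$ as required.
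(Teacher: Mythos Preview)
Your proof is correct and follows essentially the same approach as the paper: bound $\alpha_1$ in terms of $f_{\mathfrak{i},x}$ using the entry estimates from \eqref{Gamma2} and Lemma~\ref{Off-DiagonalB}, then recover $\alpha_2$ via the determinant identity $\alpha_1\alpha_2=f_{\mathfrak{i},x}g_{\mathfrak{i},y}$. The only cosmetic difference is that the paper writes out the explicit quadratic formula for $\alpha_1^2$ and estimates it directly (obtaining $\tfrac12 a^2\leq \alpha_1^2\leq (2+C^2)a^2$), whereas you use the cleaner operator-norm inequalities $\|Ae_1\|\leq \alpha_1\leq \|A\|_F$; both yield the same upper constant $\sqrt{2+C^2}$, and your lower bound is in fact sharper.
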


\begin{proof}
Let 
\[
A=\begin{pmatrix} 
a & 0 \\
b & c
\end{pmatrix}.
\]
be a matrix with $0\leq c\leq a$ and $0\leq b\leq Ca$ for some constant $C>0$. Calculating the larger singular value $\alpha_1(A)$ of $A$, which is the (positive) square root of the larger eigenvalue of $AA^T$, 
$$
\alpha_1(A)^2 = {\textstyle \frac{1}{2}} \Big( (a^2 +b^2+c^2) + \big((a^2 +b^2+c^2)^2 -4a^2c^2\big)^{1/2}\Big).
$$
Making obvious estimates, 
$$
{\textstyle \frac{1}{2}} a^2\ \leq\ \alpha_1(A)^2 \ \leq \  a^2 +b^2+c^2\ \leq\ (2+C^2)a^2.
$$
Applying this to the matrix
$$
D_{\textbf{a}} S_{\mathfrak{i}} = \begin{pmatrix} 
f_{\mathfrak{i},x}(\textbf{a}) & 0 \\
g_{\mathfrak{i},x}(\textbf{a}) & g_{\mathfrak{i},y}(\textbf{a})
\end{pmatrix},
$$
where $0 \leq g_{\mathfrak{i},y}(\textbf{a}) \leq f_{\mathfrak{i},x}(\textbf{a})$ by \eqref{Gamma2} and $0\leq |g_{\mathfrak{i},x}(\textbf{a})| \leq C f_{\mathfrak{i},x}(\textbf{a})$ by \eqref{ratio1}, gives \eqref{alpha1}. Using that $\alpha_1(A)\alpha_2(A) = \det A=ac$ for the matrix $A$, \eqref{alpha2} follows immediately from \eqref{alpha1}.
\end{proof}

A immediate  consequence of Lemmas \ref{New Lemma} and \ref{Singular Value Bound}  is that the singular values of the Jacobian matrices satisfy
\begin{equation}\label{Bounded Distortion}
\alpha_1(D_{\textbf{a}} S_{\mathfrak{i}})\asymp \alpha_1(D_{\textbf{b}} S_{\mathfrak{i}})\quad \mbox{ and } \quad
\alpha_2(D_{\textbf{a}} S_{\mathfrak{i}})\asymp \alpha_2(D_{\textbf{b}} S_{\mathfrak{i}})
\end{equation}
for all $\textbf{a}, \textbf{b}\in[0, 1]^2$ and  $\mathfrak{i}\in\mathcal{I}^*$. 

We define the projection map onto the $x$-axis $\pi:\mathbb{R}^2\rightarrow\mathbb{R}$ by $\pi(x, y)=x$.  It is immediate that the projection of the nonlinear measure $\mu$ onto the $x$-axis, $\pi(\mu)$, is a self-conformal measure. It follows from a result of Peres and Solomyak \cite{Peres} that the $L^q$-spectra of this projected measure, which we denote by 
\begin{equation}\label{betaq}
\beta(q):=\tau_{\pi(\mu)}(q),
\end{equation}
exists for $q\geq 0$. Note that this  holds even if there are complicated overlaps between the components of the projected measure, which is the typical situation for us.

For $s\in\mathbb{R}$, $q\geq 0$ and $\textbf{a}\in[0, 1]^2$, we define the \textit{q-modified singular value function}, $\psi^{s, q}_{\textbf{a}}:\mathcal{I}^*\rightarrow (0,\infty)$  by
\begin{equation}\label{qmodsv}
\psi^{s, q}_{\textbf{a}}(\mathfrak{i}) = p(\mathfrak{i})^q\alpha_1(D_{\textbf{a}} S_{\mathfrak{i}})^{\beta(q)}\alpha_2(D_{\textbf{a}} S_{\mathfrak{i}})^{s-\beta(q)}.
\end{equation}
It follows from (\ref{Bounded Distortion}) that for all $\textbf{a}, \textbf{b}\in[0, 1]^2$ and $\mathfrak{i}\in\mathcal{I}^*$ we have $\psi^{s, q}_{\textbf{a}}(\mathfrak{i})\asymp_{s,q}\psi^{s, q}_{\textbf{b}}(\mathfrak{i})$. Moreover, by Lemma \ref{New Lemma}, 
\begin{equation}\label{psiequiv}
\psi^{s, q}_{\textbf{a}}(\mathfrak{i}) \asymp_{s,q} p(\mathfrak{i})^qf_{\mathfrak{i}, x}(\textbf{a})^{\beta(q)}g_{\mathfrak{i}, y}(\textbf{a})^{s-\beta(q)}.
\end{equation}

For each $k\in\mathbb{N}$ define $\Psi_{\textbf{a}, k}^{s, q}$ by
\begin{equation}\label{Psi}
\Psi_{\textbf{a}, k}^{s, q}=\sum_{\mathfrak{i}\in\mathcal{I}^k}\psi^{s, q}_{\textbf{a}}(\mathfrak{i}).
\end{equation}
The quantities $\psi^{s, q}_{\textbf{a}}(\mathfrak{i})$ and $\Psi_{\textbf{a}, k}^{s, q}$ satisfy some useful multiplicative properties, similar to those from \cite[Lemma 2.2]{Fraser}.
\vspace{5mm}
\begin{lem}\label{sub}
Let $s\in\mathbb{R}$, $q\geq 0$ and ${\bf a}\in[0, 1]^2$.

{\rm (a)} If $\mathfrak{i},\mathfrak{j}\in\mathcal{I}^*$ then
\begin{equation}\label{pta}
\psi^{s, q}_{{\bf a}}(\mathfrak{i}\mathfrak{j})\asymp_{s,q}\psi^{s, q}_{{\bf a}}(\mathfrak{i})\psi^{s, q}_{{\bf a}}(\mathfrak{j}).
\end{equation}
{\rm (b)} If $k,l\in\mathbb{N}$ then
\begin{equation}\label{ptb}
\Psi_{{\bf a}, k+l}^{s, q}\asymp_{s,q} \Psi_{{\bf a}, k}^{s, q}\Psi_{{\bf a}, l}^{s, q}.
\end{equation}
\end{lem}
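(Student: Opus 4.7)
The plan is to prove part (a) via the explicit product formula \eqref{psiequiv} together with multiplicativity of the probability weights and the chain rule, and then obtain part (b) as a direct summation of part (a).

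First I would dispose of the probability factor by noting the evident identity $p(\mathfrak{i}\mathfrak{j}) = p(\mathfrak{i})p(\mathfrak{j})$, so that $p(\mathfrak{i}\mathfrak{j})^q = p(\mathfrak{i})^q p(\mathfrak{j})^q$. Using \eqref{psiequiv}, up to multiplicative constants depending only on $s$ and $q$, it suffices to show
\[
f_{\mathfrak{i}\mathfrak{j},x}(\mathbf{a})^{\beta(q)} g_{\mathfrak{i}\mathfrak{j},y}(\mathbf{a})^{s-\beta(q)}\;\asymp_{s,q}\; f_{\mathfrak{i},x}(\mathbf{a})^{\beta(q)} g_{\mathfrak{i},y}(\mathbf{a})^{s-\beta(q)}\, f_{\mathfrak{j},x}(\mathbf{a})^{\beta(q)} g_{\mathfrak{j},y}(\mathbf{a})^{s-\beta(q)}.
\]
By the chain rule (as in \eqref{fgchain}) we have the exact factorisations $f_{\mathfrak{i}\mathfrak{j},x}(\mathbf{a}) = f_{\mathfrak{i},x}(S_{\mathfrak{j}}\mathbf{a})\, f_{\mathfrak{j},x}(\mathbf{a})$ and $g_{\mathfrak{i}\mathfrak{j},y}(\mathbf{a}) = g_{\mathfrak{i},y}(S_{\mathfrak{j}}\mathbf{a})\, g_{\mathfrak{j},y}(\mathbf{a})$. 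The bounded distortion estimate from Lemma \ref{New Lemma} then yields $f_{\mathfrak{i},x}(S_{\mathfrak{j}}\mathbf{a}) \asymp f_{\mathfrak{i},x}(\mathbf{a})$ and $g_{\mathfrak{i},y}(S_{\mathfrak{j}}\mathbf{a}) \asymp g_{\mathfrak{i},y}(\mathbf{a})$ with absolute constant $R$, independent of $\mathfrak{i},\mathfrak{j}$ and $\mathbf{a}$.

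Raising to the powers $\beta(q)$ and $s-\beta(q)$ multiplies these constants by at most $R^{|\beta(q)|+|s-\beta(q)|}$, which is finite for each fixed $s$ and $q$; combined with the constant from \eqref{psiequiv} and its analogue for $\mathfrak{j}$ and $\mathfrak{i}\mathfrak{j}$, this gives \eqref{pta} with a constant depending only on $s$ and $q$. The only subtle point here — and the one place I would be careful — is that the exponents $\beta(q)$ and $s-\beta(q)$ may be negative or large in absolute value, so the constants blow up as $s,q$ vary; fortunately we only need uniformity in $\mathfrak{i},\mathfrak{j},\mathbf{a}$.

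For part (b), since every word in $\mathcal{I}^{k+l}$ factors uniquely as $\mathfrak{i}\mathfrak{j}$ with $\mathfrak{i}\in\mathcal{I}^k$ and $\mathfrak{j}\in\mathcal{I}^l$,
\[
\Psi_{\mathbf{a},k+l}^{s,q} \;=\; \sum_{\mathfrak{i}\in\mathcal{I}^k}\sum_{\mathfrak{j}\in\mathcal{I}^l}\psi_{\mathbf{a}}^{s,q}(\mathfrak{i}\mathfrak{j})\;\asymp_{s,q}\;\sum_{\mathfrak{i}\in\mathcal{I}^k}\sum_{\mathfrak{j}\in\mathcal{I}^l}\psi_{\mathbf{a}}^{s,q}(\mathfrak{i})\,\psi_{\mathbf{a}}^{s,q}(\mathfrak{j})\;=\;\Psi_{\mathbf{a},k}^{s,q}\,\Psi_{\mathbf{a},l}^{s,q},
\]
where the middle comparison uses \eqref{pta} term by term with a constant independent of $\mathfrak{i},\mathfrak{j}$, and the final equality factorises the double sum. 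This gives \eqref{ptb}. No new obstacle arises in this step: once the almost-multiplicativity at the level of individual words is in hand, the sum-level version is immediate.
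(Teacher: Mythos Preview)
Your proof is correct and follows essentially the same route as the paper: use \eqref{psiequiv}, the chain rule factorisation of the diagonal entries, and the bounded distortion estimate of Lemma~\ref{New Lemma} to obtain \eqref{pta}, then sum over $\mathcal{I}^k\times\mathcal{I}^l$ for \eqref{ptb}. Your explicit tracking of the constant $R^{|\beta(q)|+|s-\beta(q)|}$ is a nice addition that the paper leaves implicit.
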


\begin{proof}
By the chain rule applied to $f_{\mathfrak{ij}, x}$ and using \eqref{ratiofandg}, 
$$f_{\mathfrak{ij}, x}({\bf a}) = f_{\mathfrak{i}, x}(S_\mathfrak{i}{\bf a})f_{\mathfrak{j}, x}({\bf a})
\asymp_{s,q} f_{\mathfrak{i}, x}({\bf a})f_{\mathfrak{j}, x}({\bf a}), $$
and similarly 
$$g_{\mathfrak{ij}, y}({\bf a}) \asymp_{s,q} g_{\mathfrak{i}, y}({\bf a})g_{\mathfrak{j}, y}({\bf a}). $$
Using the form \eqref{psiequiv}
\begin{align*}
\psi^{s, q}_{\textbf{a}}(\mathfrak{ij}) &\asymp_{s,q} p(\mathfrak{ij})^qf_{\mathfrak{ij}, x}(\textbf{a})^{\beta(q)}g_{\mathfrak{ij}, y}(\textbf{a})^{s-\beta(q)}\\
&\asymp_{s,q} p(\mathfrak{i})^q p(\mathfrak{j})^q f_{\mathfrak{i}, x}(\textbf{a})^{\beta(q)}f_{\mathfrak{j}, x}(\textbf{a})^{\beta(q)}g_{\mathfrak{i}, y}(\textbf{a})^{s-\beta(q)}g_{\mathfrak{j}, y}(\textbf{a})^{s-\beta(q)}\\
&\asymp_{s,q} \psi^{s, q}_{\textbf{a}}(\mathfrak{i})\psi^{s, q}_{\textbf{a}}(\mathfrak{j}) 
\end{align*}
giving \eqref{pta}

For part (b), if $k, l\in\mathbb{N}$ then
\[
\Psi_{\textbf{a}, k+l}^{s, q}=\sum_{\mathfrak{i}\in\mathcal{I}^{k+l}}\psi^{s, q}_{\textbf{a}}(\mathfrak{i})=\sum_{\mathfrak{i}\in\mathcal{I}^{k}}\sum_{\mathfrak{j}\in\mathcal{I}^{l}}\psi^{s, q}_{\textbf{a}}(\mathfrak{ij})
\]
and
\[
\Psi_{\textbf{a}, k}^{s, q}\Psi_{\textbf{a}, l}^{s, q}=\Big(\sum_{\mathfrak{i}\in\mathcal{I}^{k}}\psi^{s, q}_{\textbf{a}}(\mathfrak{i})\Big)\Big(\sum_{\mathfrak{j}\in\mathcal{I}^{l}}\psi^{s, q}_{\textbf{a}}(\mathfrak{j})\Big)=\sum_{\mathfrak{i}\in\mathcal{I}^{k}}\sum_{\mathfrak{j}\in\mathcal{I}^{l}}\psi^{s, q}_{\textbf{a}}(\mathfrak{i})\psi^{s, q}_{\textbf{a}}(\mathfrak{j}).
\]
Applying part (a) to the double sums completes the proof.
\end{proof}

We call a sequence $\{a_n\}_{n\in\mathbb{N}}$ with $a_n>0$ (such as those in Lemma \ref{sub}) for which there exists an absolute constants $0<K_1 \leq K_2$ such that 
\begin{equation}\label{Almost}
K_1 a_na_m \leq a_{n+m}\leq K_2 a_na_m
\end{equation}
for all $n,m \in\mathbb{N}$  \textit{almost-multiplicative}. 
For such sequences the limit $\lim_{n\to\infty} a_n^{1/n}$ exists, see for example \cite[Corollary 1.2]{Falconer Techniques}.

It follows from Lemma \ref{sub} that for each $\textbf{a}\in[0, 1]^2$ we may define a function $P_{\textbf{a}}:\mathbb{R}\times[0, \infty)\rightarrow[0, \infty)$ by
\[
P_{\textbf{a}}(s, q)=\lim_{k\rightarrow\infty}(\Psi_{\textbf{a}, k}^{s, q})^{1/k}.
\]
Note that the  value of $P_{\textbf{a}}(s, q)$ is unchanged if we replace the right-hand side of \eqref{qmodsv} by 
the right-hand side of \eqref{psiequiv} in the definition of $\psi^{s, q}_{\textbf{a}}(\mathfrak{i})$ and thus of $\Psi_{\textbf{a}, k}^{s, q}$. Moreover, as $\psi^{s, q}_{\textbf{a}}(\mathfrak{i})\asymp_{s,q}\psi^{s, q}_{\textbf{b}}(\mathfrak{i})$ and thus $\Psi_{\textbf{a}, k}^{s, q}\asymp_{s,q} \Psi_{\textbf{b}, k}^{s, q}$ for all $\textbf{a}, \textbf{b}\in[0, 1]^2$ it is easy to see that $P_{\textbf{a}}$ is independent of the choice of $\textbf{a}$. Thus we shall just write $P$ instead of $P_{\textbf{a}}$.  For a fixed $q \geq 0$ we think of the function  $s\mapsto \log P(s,q)$ as the \emph{topological pressure} of the system.

We also write the following
\begin{align*}
\alpha_{\textnormal{min}}&=\inf\{\alpha_2(D_{\textbf{a}} S_{i}):\textbf{a}\in[0, 1]^2,\  i\in\mathcal{I}\},\\
\alpha_{\textnormal{max}}&=\sup\{\alpha_1(D_{\textbf{a}} S_{i}):\textbf{a}\in[0, 1]^2, \ i\in\mathcal{I}\},\\
p_{\textnormal{min}}&=\textnormal{min}\{p_i: i\in\mathcal{I}\},\\
p_{\textnormal{max}}&=\textnormal{max}\{p_i: i\in\mathcal{I}\}
\end{align*}
and note that $0<\alpha_{\textnormal{min}}, \alpha_{\textnormal{max}},p_{\textnormal{min}},p_{\textnormal{max}} <1$.

Recall that the $L^q$-spectrum of a given measure is Lipschitz continuous (as it is concave and decreasing) on $[\lambda, \infty)$ for all $\lambda>0$. Let $L_{\lambda}$ denote the Lipschitz constant of $\beta$ on $[\lambda, \infty)$. We can now state some basic properties of   $P$.

\vspace{5mm}

\begin{lem}\label{Pressure Properties}

(1)   For $s, r\in\mathbb{R}$ and $\lambda>0$ define
$$
U (s, r, \lambda)=
\textnormal{min}\left\{\alpha_{\textnormal{min}}^s p_{\textnormal{min}}^r, \alpha_{\textnormal{min}}^s p_{\textnormal{max}}^r, \alpha_{\textnormal{max}}^s p_{\textnormal{min}}^r, \alpha_{\textnormal{max}}^s p_{\textnormal{max}}^r\right\}(\alpha_{\textnormal{max}}/\alpha_{\textnormal{min}})^{\textnormal{min}\{-L_{\lambda}r, 0\}}
$$
and
$$
V (s, r, \lambda)= 
\textnormal{max}\left\{\alpha_{\textnormal{min}}^s p_{\textnormal{min}}^r, \alpha_{\textnormal{min}}^s p_{\textnormal{max}}^r, \alpha_{\textnormal{max}}^s p_{\textnormal{min}}^r, \alpha_{\textnormal{max}}^s p_{\textnormal{max}}^r\right\}(\alpha_{\textnormal{max}}/\alpha_{\textnormal{min}})^{\textnormal{max}\{-L_{\lambda}r, 0\}}.
$$
Then for all $s,t\in\mathbb{R}, \lambda>0, q\geq\lambda$ and $r\geq\lambda-q$
\[
U (s, r, \lambda)P(t, q)\leq P(s+t, q+r)\leq V(s, r, \lambda)P(t, q),
\]
and for all $s, t\in\mathbb{R}$
\[
\textnormal{min}\left\{\alpha_{\textnormal{min}}^s, \alpha_{\textnormal{max}}^s\right\}P(t, 0)\leq P(s+t, 0)\leq \textnormal{max}\left\{\alpha_{\textnormal{min}}^s, \alpha_{\textnormal{max}}^s\right\}P(t, 0).
\]
Also for all $s\in\mathbb{R}$ and $q\geq 0$
\[
P(s, q)\leq p_{\textnormal{max}}^qP(s, 0).
\]

(2) $P$ is continuous on $\mathbb{R}\times(0, \infty)$ and on $\mathbb{R}\times\{0\};$

(3) $P$ is strictly decreasing in $s\in\mathbb{R}$ and $q\in(0, \infty);$

(4) For each $q\geq 0$ there exists a unique $s\geq 0$ such that $P(s, q)=1$.
\end{lem}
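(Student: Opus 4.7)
The strategy is to establish Part (1) by a direct factor-by-factor comparison of the $q$-modified singular value functions, and then to harvest Parts (2)--(4) as essentially mechanical consequences.

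For Part (1), I compute
\[
\frac{\psi^{s+t,\,q+r}_{\textbf{a}}(\mathfrak{i})}{\psi^{t,\,q}_{\textbf{a}}(\mathfrak{i})} \;=\; p(\mathfrak{i})^{r}\, \alpha_2(D_{\textbf{a}} S_\mathfrak{i})^{s} \left(\frac{\alpha_1(D_{\textbf{a}} S_\mathfrak{i})}{\alpha_2(D_{\textbf{a}} S_\mathfrak{i})}\right)^{\Delta},
\]
where $\Delta := \beta(q+r) - \beta(q)$. For $\mathfrak{i}\in\mathcal{I}^{k}$, each factor admits a $k$-fold product representation via \eqref{fgchain} together with Lemmas \ref{New Lemma} and \ref{Singular Value Bound}. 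Up to $k$-independent multiplicative constants, $p(\mathfrak{i})^{r}$ lies in $[\min(p_{\min}^{r},p_{\max}^{r})^{k},\max(p_{\min}^{r},p_{\max}^{r})^{k}]$, $\alpha_2(D_{\textbf{a}} S_\mathfrak{i})^{s}$ lies in $[\min(\alpha_{\min}^{s},\alpha_{\max}^{s})^{k},\max(\alpha_{\min}^{s},\alpha_{\max}^{s})^{k}]$, and the aspect ratio $\alpha_1/\alpha_2$ lies in $[1,(\alpha_{\max}/\alpha_{\min})^{k}]$ (the lower bound being the domination condition). Combining the Lipschitz continuity of $\beta$ on $[\lambda,\infty)$ with the fact that $\beta$ is decreasing forces $\Delta$ to lie between $0$ and $-L_{\lambda} r$, which is exactly why $\min(-L_{\lambda} r,0)$ and $\max(-L_{\lambda} r,0)$ appear as exponents of $\alpha_{\max}/\alpha_{\min}$. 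Multiplying these per-factor bounds, summing over $\mathfrak{i}\in\mathcal{I}^{k}$ to absorb $\psi^{t,q}_{\textbf{a}}$ into $\Psi_{\textbf{a},k}^{t,q}$, taking $k$-th roots and passing to the limit yields the two-sided estimate. The $q=0$ variant is the degenerate specialisation $\Delta=r=0$, leaving only the $\alpha_2^{s}$ factor. The auxiliary estimate $P(s,q)\leq p_{\max}^{q}P(s,0)$ combines $p(\mathfrak{i})^{q}\leq p_{\max}^{kq}$ with the pointwise inequality $(\alpha_1/\alpha_2)^{\beta(q)-\beta(0)}\leq 1$, which follows from $\beta(q)\leq\beta(0)$ and $\alpha_1\geq\alpha_2$.

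Parts (2), (3), and (4) then follow from Part (1) with little further effort. For continuity on $\mathbb{R}\times(0,\infty)$, both $U$ and $V$ tend to $1$ as $(s,r)\to(0,0)$ by inspection, sandwiching $P(s+t,q+r)$ around $P(t,q)$. Continuity on $\mathbb{R}\times\{0\}$ in $s$ follows from the $q=0$ bound, while continuity as $q\to 0^{+}$ combines the upper bound $P(s,q)\leq p_{\max}^{q}P(s,0)$ with a matching lower bound $P(s,q)\geq p_{\min}^{q}(\alpha_{\min}/\alpha_{\max})^{\beta(0)-\beta(q)}P(s,0)$ (obtained by the same method with signs reversed) and the continuity of $\beta$ at $0$. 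For strict monotonicity, specialising Part (1) with $r=0$ and $t>0$ gives $P(s+t,q)\leq\alpha_{\max}^{t}P(s,q)<P(s,q)$ since $\alpha_{\max}<1$, and with $s=t=0$, $r>0$ gives $P(s,q+r)\leq p_{\max}^{r}P(s,q)<P(s,q)$ since $p_{\max}<1$. For Part (4), uniqueness is immediate from strict monotonicity; existence follows from continuity via the intermediate value theorem combined with the asymptotics $P(s,q)\to\infty$ as $s\to-\infty$ and $P(s,q)\to 0$ as $s\to+\infty$ (both from Part (1) with $r=0$). The non-negativity $s\geq 0$ reduces to $P(0,q)\geq 1$, which for the relevant ranges of $q$ follows from $\sum_{\mathfrak{i}\in\mathcal{I}^{k}}p(\mathfrak{i})^{q}\geq 1$ (since $x^{q}\geq x$ on $[0,1]$ when $q\leq 1$) together with $(\alpha_1/\alpha_2)^{\beta(q)}\geq 1$ when $\beta(q)\geq 0$.

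The principal technical obstacle is entirely in Part (1): regardless of the sign of $r$, one must verify that $\Delta$ and $-L_{\lambda} r$ have the same sign and $|\Delta|\leq L_{\lambda}|r|$, so that the one-sided $\min$/$\max$-with-$0$ form of the estimate is sharp. This combines the Lipschitz continuity of $\beta$ on $[\lambda,\infty)$ with its monotonicity in an essential way; once this sign bookkeeping is settled, the remainder is routine.
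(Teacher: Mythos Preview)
Your proposal is correct and follows the natural approach; the paper itself omits the proof entirely, deferring to Fraser \cite[Lemma 2.3]{Fraser}, and your argument is precisely the expected adaptation of that proof to the present setting. One small remark: the claim in Part (4) that the unique $s$ satisfies $s\geq 0$ can only hold for $q\leq 1$ (since $\gamma(1)=0$ and $\gamma$ is strictly decreasing), so your hedge ``for the relevant ranges of $q$'' is appropriate---this appears to be a minor imprecision in the statement itself rather than a gap in your argument.
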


\begin{proof}
This is essentially the same as the proof of the analogous result of Fraser \cite[Lemma 2.3]{Fraser} and as such is omitted.
\end{proof}

It follows from Lemma \ref{Pressure Properties} that we may define a function $\gamma:[0, \infty)\rightarrow\mathbb{R}$ by $P(\gamma(q), q)=1$ which we shall refer to as a \textit{moment scaling function}.   The moment scaling function satisfies the following useful properties.

\vspace{5mm}

\begin{lem} \,

(1) $\gamma$ is strictly decreasing on $[0, \infty);$

(2) $\gamma$ is continuous on $(0, \infty);$

(3) $\gamma(1)=0$ and $\lim_{q\rightarrow\infty}\gamma(q)=-\infty;$

(4) $\gamma$ is convex on $(0, \infty)$.
\end{lem}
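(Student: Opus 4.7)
The plan is to verify the four properties in the order $(1)$, $(3)$, $(2)$, $(4)$, drawing throughout on the monotonicity, continuity and almost-multiplicativity of $P$ from Lemma~\ref{Pressure Properties}.

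For $(1)$, suppose $q_1<q_2$ but $\gamma(q_1)\leq \gamma(q_2)$. Since $P$ is strictly decreasing in each argument (Lemma~\ref{Pressure Properties}(3)), $1=P(\gamma(q_1),q_1)\geq P(\gamma(q_2),q_1)>P(\gamma(q_2),q_2)=1$, a contradiction. For $(3)$, at $s=0$ and $q=1$ the singular-value factors in \eqref{qmodsv} are all $1$, so $\Psi_{\textbf{a},k}^{0,1}=\sum_{\mathfrak{i}\in\mathcal{I}^k}p(\mathfrak{i})=1$, which gives $P(0,1)=1$ and then $\gamma(1)=0$ by the uniqueness statement of Lemma~\ref{Pressure Properties}(4). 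To obtain $\gamma(q)\to-\infty$, suppose for contradiction that $\gamma(q_n)\geq s_0$ along some $q_n\to\infty$. Then by monotonicity of $P$ in $s$ and the bound $P(s,q)\leq p_{\max}^q P(s,0)$ from Lemma~\ref{Pressure Properties}(1),
\[
1=P(\gamma(q_n),q_n)\leq P(s_0,q_n)\leq p_{\max}^{q_n}P(s_0,0)\to 0,
\]
a contradiction since $p_{\max}<1$.

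For $(2)$, fix $q_0>0$ and $\epsilon>0$. Strict decrease of $P$ in $s$ gives $P(\gamma(q_0)-\epsilon,q_0)>1>P(\gamma(q_0)+\epsilon,q_0)$, and continuity of $P$ on $\mathbb{R}\times(0,\infty)$ preserves both strict inequalities uniformly for $q$ in a small neighbourhood of $q_0$. As $\gamma(q)$ is the unique solution of $P(\cdot,q)=1$ and $P$ is strictly decreasing in $s$, this forces $|\gamma(q)-\gamma(q_0)|<\epsilon$ on that neighbourhood.

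The main obstacle is $(4)$. I would prove convexity of $\gamma$ by showing that $\log P$ is jointly convex on $\mathbb{R}\times(0,\infty)$; the sublevel set $\{(q,s):P(s,q)\leq 1\}$ is then convex, and since $P$ is strictly decreasing in $s$ this set coincides with the epigraph $\{(q,s):s\geq\gamma(q)\}$ of $\gamma$, yielding convexity. To establish joint log-convexity of $P$ I would use the re-writing
\[
\psi^{s,q}_{\textbf{a}}(\mathfrak{i})\asymp_{s,q} p(\mathfrak{i})^{q}\,g_{\mathfrak{i},y}(\textbf{a})^{s}\bigl(f_{\mathfrak{i},x}(\textbf{a})/g_{\mathfrak{i},y}(\textbf{a})\bigr)^{\beta(q)}
\]
coming from \eqref{psiequiv}. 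The two inputs are that $f_{\mathfrak{i},x}/g_{\mathfrak{i},y}\geq 1$ by the domination condition, and that $\beta=\tau_{\pi(\mu)}$ is a convex function of $q$ (a standard consequence of H\"older applied to the moment sum \eqref{momsum} before passing to the scale limit). For $\theta\in(0,1)$ and $(\tilde s,\tilde q)=\theta(s_1,q_1)+(1-\theta)(s_2,q_2)$, convexity of $\beta$ gives $\beta(\tilde q)\leq \theta\beta(q_1)+(1-\theta)\beta(q_2)$, and raising the factor $\geq 1$ to a smaller exponent decreases it, producing the summand-wise inequality $\psi^{\tilde s,\tilde q}_{\textbf{a}}(\mathfrak{i})\lesssim \psi^{s_1,q_1}_{\textbf{a}}(\mathfrak{i})^{\theta}\,\psi^{s_2,q_2}_{\textbf{a}}(\mathfrak{i})^{1-\theta}$. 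Summing over $\mathfrak{i}\in\mathcal{I}^k$ and applying H\"older yields $\Psi_{\textbf{a},k}^{\tilde s,\tilde q}\lesssim (\Psi_{\textbf{a},k}^{s_1,q_1})^{\theta}(\Psi_{\textbf{a},k}^{s_2,q_2})^{1-\theta}$; taking $k$-th roots and sending $k\to\infty$ absorbs the constant and gives $P(\tilde s,\tilde q)\leq P(s_1,q_1)^{\theta}P(s_2,q_2)^{1-\theta}$. The one delicate point is bookkeeping the sign of $\beta(q)$ (which changes at $q=1$) when raising $f_{\mathfrak{i},x}/g_{\mathfrak{i},y}$ to the power $\beta(q)$; the uniform lower bound $f_{\mathfrak{i},x}/g_{\mathfrak{i},y}\geq 1$ supplied by the domination condition is precisely what makes the inequality go the correct way in both the $\beta(q)>0$ and $\beta(q)<0$ regimes.
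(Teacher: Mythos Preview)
Your proof is correct. The paper does not supply an argument here, deferring entirely to \cite[Lemma~2.5]{Fraser}, so there is no in-paper proof to compare against; your arguments for (1)--(3) are the natural ones from Lemma~\ref{Pressure Properties}, and your route to (4) via joint log-convexity of $P$ (H\"older on the summand-wise inequality coming from convexity of $\beta$ together with $f_{\mathfrak{i},x}/g_{\mathfrak{i},y}\geq 1$, then passing to the $k$-th-root limit to kill the comparison constants) is the standard approach and almost certainly what Fraser does. Two minor remarks: in (1), Lemma~\ref{Pressure Properties}(3) asserts strict decrease in $q$ only on $(0,\infty)$, so to cover $q_1=0$ you should appeal to the bound $P(s,q)\leq p_{\max}^{q}P(s,0)$ from Lemma~\ref{Pressure Properties}(1), which gives the needed strict inequality since $p_{\max}<1$; and in (4), your concern about the sign of $\beta(q)$ is not actually an issue, since for a base $f_{\mathfrak{i},x}/g_{\mathfrak{i},y}\geq 1$ the map $e\mapsto(f_{\mathfrak{i},x}/g_{\mathfrak{i},y})^{e}$ is nondecreasing for all real $e$, so the inequality goes the right way uniformly.
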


\begin{proof}
This follows by the same reasoning as in the proof of \cite[Lemma 2.5]{Fraser}.
\end{proof}

We can now state our main theorem which relates $\gamma$ to the $L^q$-spectrum $\tau_{\mu}(q)$ of $\mu$. 

\vspace{5mm}

\begin{thm}\label{Main Theorem}
Let $\mu$ be a nonlinear measure which satisfies the domination condition \eqref{Domination}. Then

(1) For $q\in [0, 1]$
\[
\overline{\tau}_{\mu}(q)\leq\gamma(q);
\]
(2) For $q\geq 1$
\[
\underline{\tau}_{\mu}(q)\geq\gamma(q);
\]
(3) If $\mu$ also satisfies the ROSC then for all $q\geq 0$
\[
\tau_{\mu}(q)=\gamma(q).
\]
\end{thm}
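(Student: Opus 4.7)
\emph{Overall strategy.} The plan is to follow the template of Fraser \cite{Fraser} for self-affine carpets, transported to the nonlinear setting via the bounded-distortion estimates of Lemmas \ref{New Lemma}, \ref{Off-DiagonalB} and \ref{Singular Value Bound}. The pressure identity $P(\gamma(q), q) = 1$ supplies the exponent, while the projected spectrum $\beta(q)$ built into $\psi^{s,q}_{\textbf{a}}$ absorbs the vertical combinatorics of cylinders sharing a single $\delta$-column.

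\emph{Horizontal stopping and the pressure balance.} For each $\delta \in (0, 1)$ I would introduce the horizontal stopping set
\[
\mathcal{I}_\delta = \{\mathfrak{i} = (i_1,\ldots,i_k) \in \mathcal{I}^* : f_{\mathfrak{i},x}(\textbf{a}) \leq \delta < f_{(i_1,\ldots,i_{k-1}),x}(\textbf{a})\},
\]
which by \eqref{ratiofandg} is independent of $\textbf{a}$ up to multiplicative constants and satisfies $f_{\mathfrak{i},x}(\textbf{a}) \asymp \delta$. Using the almost-multiplicative property of $\Psi_{\textbf{a},\cdot}^{s,q}$ (Lemma \ref{sub}), the fixed-length identity $P(\gamma(q), q) = 1$ would be promoted to the stopping-set version $\sum_{\mathfrak{i} \in \mathcal{I}_\delta} \psi^{\gamma(q), q}_{\textbf{a}}(\mathfrak{i}) \asymp 1$, which after substituting $f_{\mathfrak{i},x} \asymp \delta$ rearranges to the key balance
\[
\sum_{\mathfrak{i} \in \mathcal{I}_\delta} p(\mathfrak{i})^q \, g_{\mathfrak{i},y}(\textbf{a})^{\gamma(q) - \beta(q)} \asymp \delta^{-\beta(q)}.
\]
By the domination condition each cylinder $S_{\mathfrak{i}}([0,1]^2)$ with $\mathfrak{i} \in \mathcal{I}_\delta$ is essentially a $\delta \times g_{\mathfrak{i},y}$ rectangle with $g_{\mathfrak{i},y} \lesssim \delta$, and therefore meets $O(1)$ many $\delta$-mesh cubes.

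\emph{The three parts.} For (1), given a $\delta$-mesh cube $Q$, bound $\mu(Q) \leq \sum_{\mathfrak{i}} p(\mathfrak{i})$ summed over $\mathfrak{i} \in \mathcal{I}_\delta$ with $S_{\mathfrak{i}}([0,1]^2) \cap Q \neq \emptyset$, apply the inequality $(\sum x_i)^q \leq \sum x_i^q$ valid for $q \in [0,1]$, and sum over cubes. To convert the resulting cylinder sum into $\delta^{-\gamma(q)}$ one regroups cylinders within each $\delta$-column by their vertical scale $g_{\mathfrak{i},y}$ and invokes the projection $\pi(\mu)$, whose $L^q$-spectrum is $\beta(q)$ via \cite{Peres}, so that the column sum combined with the key balance above supplies the exponent $\gamma(q)$. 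For (2), a dual argument using convexity of $x \mapsto x^q$ for $q \geq 1$, combined with lower bounds on $\mu(Q)$ coming from individual cylinders contained in $Q$ (at a slightly refined stopping level), yields the matching lower bound without needing the ROSC. For (3), the ROSC makes the cylinders $\{S_{\mathfrak{i}}([0,1]^2)\}_{\mathfrak{i} \in \mathcal{I}_\delta}$ pairwise essentially disjoint, so mass can be packed faithfully into cubes; this supplies the reverse inequalities — a lower bound for $q \in [0,1]$ and an upper bound for $q \geq 1$ — and hence equality.

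\emph{Main obstacle.} The central technical difficulty is the interaction between the vertical heterogeneity of cylinders $g_{\mathfrak{i},y}$ and the projected spectrum $\beta(q)$. Cylinders at the same horizontal scale can have vastly different vertical extents and stack arbitrarily within a single $\delta$-column, so a naive bound using only the horizontal scale loses exponent: depending on the sign of $\gamma(q) - \beta(q)$ the direct estimate $g_{\mathfrak{i},y} \leq \delta$ can go in either direction in the key balance. The role of $\beta(q)$ as the exponent of $\alpha_1$ in $\psi^{s,q}_{\textbf{a}}$ is precisely to absorb this column-wise concentration via the $L^q$-spectrum of $\pi(\mu)$, and making the absorption quantitative with constants uniform in $\delta$ — particularly when $\gamma(q) > \beta(q)$, where subadditivity in $q \in [0,1]$ alone does not suffice — is the heart of the proof.
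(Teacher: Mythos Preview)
Your choice of stopping set is the wrong one, and this is a genuine gap rather than a technicality. You stop on the \emph{horizontal} scale $f_{\mathfrak{i},x}\asymp\alpha_1\asymp\delta$, so each cylinder is a $\delta\times g_{\mathfrak{i},y}$ box with $g_{\mathfrak{i},y}\lesssim\delta$ and meets $O(1)$ mesh cubes. That forces $\mathcal{D}^q_\delta(\mu_{\mathfrak{i}})\asymp p(\mathfrak{i})^q$, and for $q\in[0,1]$ your subadditivity step yields only $\mathcal{D}^q_\delta(\mu)\lesssim\sum_{\mathfrak{i}\in\mathcal{I}_\delta}p(\mathfrak{i})^q$. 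But this last sum is \emph{not} bounded by $\delta^{-\gamma(q)}$: your own key balance $\sum p(\mathfrak{i})^q g_{\mathfrak{i},y}^{\gamma(q)-\beta(q)}\asymp\delta^{-\beta(q)}$ together with $g_{\mathfrak{i},y}<\delta$ gives, when $\gamma(q)>\beta(q)$ (the typical situation for $q\in[0,1]$), the inequality $\sum p(\mathfrak{i})^q\gtrsim\delta^{-\gamma(q)}$ --- the wrong direction for part~(1). You flag this as the main obstacle, but the proposed remedy (``regroup cylinders within each $\delta$-column and invoke $\pi(\mu)$'') cannot work: the column sum $\sum_{\mathfrak{i}\in C}p(\mathfrak{i})^q$ bears no controlled relation to $\pi\mu(C)^q$, because arbitrarily many horizontally-stopped cylinders can stack in one column, and $\beta(q)$ says nothing about this vertical multiplicity.

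The paper stops on the \emph{vertical} scale: $\mathcal{I}_{\textbf{a},\delta}=\{\mathfrak{i}:\alpha_2(D_{\textbf{a}}S_{\mathfrak{i}})<\delta\leq\alpha_2(D_{\textbf{a}}S_{\hat{\mathfrak{i}}})\}$. Then each cylinder has height $\asymp\delta$ but width $\alpha_1\gg\delta$, so it meets $O(1)$ cubes in every column it crosses. Pulling back by $S_{\mathfrak{i}}^{-1}$ turns the $\delta$-mesh on $S_{\mathfrak{i}}([0,1]^2)$ into essentially a $(\delta/\alpha_1)$-mesh on $[0,1]$ for the projected measure, giving the crucial comparison $\mathcal{D}^q_\delta(\mu_{\mathfrak{i}})\asymp\mathcal{D}^q_{\delta/\alpha_1}(p(\mathfrak{i})\pi\mu)$ (Lemma~\ref{Moment Sums Lemma2}). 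Now $\beta(q)$ enters exactly as the exponent of $\delta/\alpha_1$, and since $\alpha_2\asymp\delta$ the resulting sum over $\mathcal{I}_{\textbf{a},\delta}$ is precisely $\sum_{\mathfrak{i}}\psi^{\gamma(q)\pm\varepsilon,q}_{\textbf{a}}(\mathfrak{i})$, controlled by Lemma~\ref{Sum Lemma}. In short: the projected spectrum $\beta(q)$ governs how mass spreads \emph{along} a thin cylinder across many columns, not how many thin cylinders pile up in one column; the vertical stopping is what makes that mechanism available.
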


We shall prove this theorem in  Section \ref{Calculating Section}.

As a corollary we are able  to calculate the box dimension of the support of these measures. We recall the definition of the box dimension.

\vspace{5mm}

\begin{defn}\label{Box Dimension Def}
Let $X\subset\mathbb{R}^n$ be bounded and non-empty and let $N_{\delta}(X)$ denote the minimal number of sets of diameter at most $\delta$ needed to cover $X$. The upper and lower box dimension are defined to be 
\[
\overline{\dim_{B}} X= \overline{\lim}_{\delta\rightarrow 0} \frac{\log N_{\delta}(X)}{-\log \delta}
\]
and
\[
\underline{\dim_{B}} X= \underline{\lim}_{\delta\rightarrow 0} \frac{\log N_{\delta}(X)}{-\log \delta}
\]
respectively. If these numbers coincide then we define the box dimension of $X$, denoted $\dim_B X$, to be their common value.
\end{defn}

\vspace{5mm}

\begin{cor}\label{Box Dimension}
Let $F$ be a nonlinear attractor  which satisfies the domination condition \eqref{Domination}. Then
(1) 
\[
\overline{\dim_{B}} F \leq \gamma(0);
\]
(2) If $F$ also satisfies the ROSC then 
\[
\dim_{B} F =\gamma(0).
\]

\end{cor}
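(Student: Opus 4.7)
The plan is to deduce both parts of the corollary directly from Theorem \ref{Main Theorem} applied at $q=0$, using the standard identification between the box dimension of the support of a measure and the $L^q$-spectrum at $q=0$, as observed in the introduction.

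First, I would fix any probability vector $\{p_i\}_{i\in\mathcal{I}}$ with each $p_i \in (0,1)$ (for example, the uniform vector $p_i = 1/|\mathcal{I}|$) and let $\mu$ be the associated nonlinear measure from Definition \ref{probs}. Because all weights are strictly positive, a routine argument shows that $\mathrm{supp}(\mu) = F$: every cylinder set $S_{\mathfrak{i}}(F)$ has positive $\mu$-measure by $\mu(S_{\mathfrak{i}}(F)) \geq p(\mathfrak{i}) > 0$, and these cylinders form a covering basis for $F$ whose diameters shrink to zero by \eqref{cont}. Consequently the $\delta$-mesh cubes that carry positive $\mu$-measure are precisely (up to a bounded overlap factor in $\mathbb{R}^2$) those that meet $F$, so
\[
\mathcal{D}^0_\delta(\mu) = |\mathcal{D}_\delta| \asymp N_\delta(F),
\]
which yields $\overline{\tau}_\mu(0) = \overline{\dim_B} F$ and $\underline{\tau}_\mu(0) = \underline{\dim_B} F$.

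Next, I would observe that $\gamma(0)$ does not depend on the choice of $\{p_i\}_{i\in\mathcal{I}}$: setting $q=0$ in \eqref{qmodsv} makes the factor $p(\mathfrak{i})^q$ equal to $1$, and the exponent $\beta(0) = \tau_{\pi(\mu)}(0)$ equals the box dimension of $\pi(F)$, which again does not depend on the probabilities (the projected measure has support $\pi(F)$, so its $L^0$-spectrum is the box dimension of $\pi(F)$). Thus the quantity $\gamma(0)$ is genuinely a geometric invariant of $F$.

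With these observations in place the proof is immediate. Part (1) of Theorem \ref{Main Theorem} at $q=0$ gives $\overline{\tau}_\mu(0) \leq \gamma(0)$, and combining with the identification above proves part (1) of the corollary, $\overline{\dim_B} F \leq \gamma(0)$. Under the additional ROSC hypothesis, part (3) of Theorem \ref{Main Theorem} at $q=0$ gives $\tau_\mu(0) = \gamma(0)$, so $\overline{\dim_B} F = \underline{\dim_B} F = \gamma(0)$, which is part (2). The only subtle point to verify carefully is the reduction $\mathcal{D}^0_\delta(\mu) \asymp N_\delta(F)$ and the probability-independence of $\gamma(0)$; neither involves any essential difficulty, so no real obstacle arises beyond what has already been dealt with in the proof of Theorem \ref{Main Theorem}.
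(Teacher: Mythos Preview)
Your proof is correct and follows the same approach as the paper: deduce the result directly from Theorem \ref{Main Theorem} at $q=0$ via the identification of $\overline{\tau}_\mu(0)$ and $\underline{\tau}_\mu(0)$ with the upper and lower box dimensions of $\mathrm{supp}(\mu)=F$. The paper's own proof is a two-sentence appeal to this well-known fact, so your version simply fills in the details (support identification, $\mathcal{D}^0_\delta(\mu)\asymp N_\delta(F)$, and the probability-independence of $\gamma(0)$) that the paper leaves implicit.
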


\begin{proof}
It is well-known that the upper and lower box dimension of the support of a measure is given by the upper and lower $L^q$-spectrum at $0$. The result is then immediate from Theorem \ref{Main Theorem}.
\end{proof}

Note that $\gamma(0)$ depends on $\beta(0)=\dim_B \pi F$, the box dimension of the projection of $F$ onto the $x$-axis.  Also note that by standard results, e.g. \cite[Corollary 3.10]{Falconer}, the \emph{packing} dimension of a nonlinear attractor coincides with the upper box dimension and so Corollary \ref{Box Dimension} also yields the packing dimension.

\section{Calculating the $L^q$-spectrum}\label{Calculating Section}

We begin this section by introducing some notation. 
For ${\mathfrak{i}}=(i_1, i_2,\dots, i_{k}) \in \mathcal{I}^*$ let   $\widehat{\mathfrak{i}}\in\mathcal{I}^*\cup\{\omega\}$ be given by
\[
\hat{\mathfrak{i}}=(i_1, i_2,\dots, i_{k-1})
\]
where $\omega$ is the empty word. For $\delta\in(0, 1]$ and $\textbf{a}\in[0, 1]^2$ we define the $\delta$\textit{-stopping} by
\[
\mathcal{I}_{\textbf{a},\delta}=\{\mathfrak{i}\in\mathcal{I}^*:\alpha_2(D_{\textbf{a}} S_{\mathfrak{i}})<\delta\leq\alpha_2(D_{\textbf{a}} S_{\hat{\mathfrak{i}}})\}
\]
where  $S_{\omega}$ is the identity map. Note that if $\mathfrak{i}\in\mathcal{I}_{\textbf{a},\delta}$ then
\begin{equation}\label{Alpha Min Bound}
\alpha_{\textnormal{min}}\delta\leq\alpha_2(D_{\textbf{a}} S_{\mathfrak{i}})<\delta.
\end{equation}
For $\mathfrak{i}\in\mathcal{I}^*$ let $\mu_{\mathfrak{i}}=p(\mathfrak{i})\mu\circ S_{\mathfrak{i}}^{-1}$ and $F_{\mathfrak{i}}=S_{\mathfrak{i}}(F)=\textnormal{supp}\mu_{\mathfrak{i}}.$ Note that for all $\textbf{a}\in[0, 1]^2$ and $ \delta \in (0,1]$,
\[
\mu=\sum_{\mathfrak{i}\in\mathcal{I}_{\textbf{a},\delta}}\mu_{\mathfrak{i}}.
\]

\begin{lem}\label{Sum Lemma}
Let ${\bf a}\in[0, 1]^2$, $t\in\mathbb{R}$ and $q\geq 0$.

(1) If $t>\gamma(q)$ then
\[
\sum_{\mathfrak{i}\in\mathcal{I}_{{\bf a},\delta}}\psi^{t, q}_{{\bf a}}(\mathfrak{i})\lesssim_{t, q} 1
\]
for all $\delta\in(0, 1]$.

(2) If $t<\gamma(q)$ then
\[
\sum_{\mathfrak{i}\in\mathcal{I}_{{\bf a},\delta}}\psi^{t, q}_{{\bf a}}(\mathfrak{i})\gtrsim_{t, q} 1
\]
for all $\delta\in(0, 1]$.

\end{lem}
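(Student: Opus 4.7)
The plan uses three ingredients. First, by the definition $P(\gamma(q),q)=1$ and the strict monotonicity of $P$ in its first variable (Lemma \ref{Pressure Properties}(3)), $P(t,q)<1$ when $t>\gamma(q)$ and $P(t,q)>1$ when $t<\gamma(q)$. Second, Lemma \ref{sub}(b) shows that $\{\Psi^{t,q}_{\mathbf{a},k}\}_k$ is almost-multiplicative, and the standard sub/superadditive argument for the associated logarithms upgrades this to the two-sided estimate $\Psi^{t,q}_{\mathbf{a},k}\asymp_{t,q}P(t,q)^k$ with constants uniform in $k$. Third, Lemma \ref{Singular Value Bound} combined with the chain rule and the domination condition yields $d^{|\mathfrak{i}|}\lesssim\alpha_2(D_{\mathbf{a}}S_{\mathfrak{i}})\lesssim \eta_0^{|\mathfrak{i}|}$, where $\eta_0:=\sup_{i,\mathbf{a}}g_{i,y}(\mathbf{a})<1$; together with \eqref{Alpha Min Bound} this confines every $\mathfrak{i}\in\mathcal{I}_{\mathbf{a},\delta}$ to a length range $k_{\min}(\delta)\leq|\mathfrak{i}|\leq k_{\max}(\delta)$, with both endpoints comparable to $\log(1/\delta)$.

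For part (1), since $P(t,q)<1$ I would split the stopping set by length and apply the second ingredient:
\begin{equation*}
\sum_{\mathfrak{i}\in\mathcal{I}_{\mathbf{a},\delta}}\psi^{t,q}_{\mathbf{a}}(\mathfrak{i}) \;\leq\; \sum_{k=k_{\min}(\delta)}^{k_{\max}(\delta)}\Psi^{t,q}_{\mathbf{a},k} \;\lesssim_{t,q}\; \sum_{k=0}^{\infty}P(t,q)^k \;\lesssim_{t,q}\; 1,
\end{equation*}
summing the geometric series.

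For part (2), exploit the tree-section property: since $\alpha_2(D_{\mathbf{a}}S_{\mathfrak{j}|_k})\to 0$ as $k\to\infty$, each infinite word has a unique shortest prefix with $\alpha_2<\delta$, which by definition lies in $\mathcal{I}_{\mathbf{a},\delta}$. Hence for any $N\geq k_{\max}(\delta)$ every $\mathfrak{j}\in\mathcal{I}^N$ factorises uniquely as $\mathfrak{j}=\mathfrak{i}\mathfrak{j}'$ with $\mathfrak{i}\in\mathcal{I}_{\mathbf{a},\delta}$ and $|\mathfrak{j}'|=N-|\mathfrak{i}|$, so by Lemma \ref{sub}(a),
\begin{equation*}
\Psi^{t,q}_{\mathbf{a},N} \;=\; \sum_{\mathfrak{i}\in\mathcal{I}_{\mathbf{a},\delta}}\sum_{|\mathfrak{j}'|=N-|\mathfrak{i}|}\psi^{t,q}_{\mathbf{a}}(\mathfrak{i}\mathfrak{j}') \;\asymp_{t,q}\; \sum_{\mathfrak{i}}\psi^{t,q}_{\mathbf{a}}(\mathfrak{i})\,\Psi^{t,q}_{\mathbf{a},N-|\mathfrak{i}|}.
\end{equation*}
Since $P(t,q)>1$ and $|\mathfrak{i}|\geq k_{\min}(\delta)$, one has $\Psi^{t,q}_{\mathbf{a},N-|\mathfrak{i}|}\lesssim_{t,q} P(t,q)^{N-k_{\min}(\delta)}$; substituting this and cancelling using $\Psi^{t,q}_{\mathbf{a},N}\asymp_{t,q} P(t,q)^N$ gives
\begin{equation*}
\sum_{\mathfrak{i}\in\mathcal{I}_{\mathbf{a},\delta}}\psi^{t,q}_{\mathbf{a}}(\mathfrak{i}) \;\gtrsim_{t,q}\; P(t,q)^{k_{\min}(\delta)} \;\geq\; 1,
\end{equation*}
and this bound is independent of the auxiliary choice of $N$.

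The only nontrivial step is the tree-section factorisation used in part (2), which is immediate once one knows that $\alpha_2$ along any chain of prefixes decays to $0$; the rest reduces to geometric-series bookkeeping with constants depending only on $t$ and $q$.
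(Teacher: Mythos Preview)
Your proof is correct and is precisely the kind of argument the paper has in mind: it omits the proof and defers to \cite[Lemma 7.1]{Fraser}, remarking that only the (almost-)multiplicative properties of $\Psi$ are needed, and those are exactly what you use. The two-sided bound $\Psi^{t,q}_{\mathbf{a},k}\asymp_{t,q}P(t,q)^k$ follows, as you say, from Fekete applied to $K_1\Psi_k$ and $K_2\Psi_k$, and the tree-section factorisation in part (2) is the standard device in this setting.

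Two cosmetic remarks. For part (1) the length window $[k_{\min},k_{\max}]$ is not actually needed: since $P(t,q)<1$ you may simply bound $\sum_{\mathfrak{i}\in\mathcal{I}_{\mathbf{a},\delta}}\psi^{t,q}_{\mathbf{a}}(\mathfrak{i})\leq\sum_{k\geq 1}\Psi^{t,q}_{\mathbf{a},k}$ and sum the full geometric series. For part (2) you only need $k_{\min}(\delta)\geq 1$, which is automatic since $\mathcal{I}_{\mathbf{a},\delta}\subseteq\mathcal{I}^*$; the finer estimate $k_{\min}(\delta)\asymp\log(1/\delta)$ is true but superfluous for the conclusion $P(t,q)^{k_{\min}(\delta)}\geq 1$.
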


\begin{proof}
The proof follows that of \cite[Lemma 7.1]{Fraser} which only depends on the multiplicative properties of $\Psi$ (which we have established here) so is omitted.
\end{proof}

\vspace{5mm}

Our next lemma allows us to control the length of the side of $S_{\mathfrak{i}}([0, 1]^2)$ in terms of the length of its base.

\begin{lem}\label{Slope Ratio}
There exists $L>0$ such that for all $\mathfrak{i}\in\mathcal{I}^*$ and all $0\leq a<b\leq 1$ 
\[
\frac{|g_{\mathfrak{i}}(b, 1)-g_{\mathfrak{i}}(a,1)|}{|f_{\mathfrak{i}}(b)-f_{\mathfrak{i}}(a)|}\leq L,
\]
\end{lem}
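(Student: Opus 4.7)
The plan is to reduce everything to a single ratio of partial derivatives that has already been controlled in Lemma~\ref{Off-DiagonalB}. Fix $\mathfrak{i}\in\mathcal{I}^*$ and $0\leq a<b\leq 1$. Since $f_\mathfrak{i}$ depends only on its first argument, the one-variable mean value theorem gives some $c\in(a,b)$ with
\[
f_\mathfrak{i}(b)-f_\mathfrak{i}(a)=f_{\mathfrak{i},x}(c)(b-a),
\]
and $f_{\mathfrak{i},x}(c)>0$ by the chain rule \eqref{fgchain} together with the lower bound on each $f_{i,x}$ coming from \eqref{Domination}. Applying the one-variable mean value theorem to the restriction $x\mapsto g_\mathfrak{i}(x,1)$ produces some $\xi\in(a,b)$ with
\[
g_\mathfrak{i}(b,1)-g_\mathfrak{i}(a,1)=g_{\mathfrak{i},x}(\xi,1)(b-a).
\]

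Dividing these two expressions, the factor $(b-a)$ cancels and the ratio in question becomes
\[
\frac{|g_\mathfrak{i}(b,1)-g_\mathfrak{i}(a,1)|}{|f_\mathfrak{i}(b)-f_\mathfrak{i}(a)|}=\frac{|g_{\mathfrak{i},x}(\xi,1)|}{f_{\mathfrak{i},x}(c)}.
\]
Now the points $(\xi,1)$ and $(c,0)$ (say) both lie in $[0,1]^2$, so Lemma~\ref{Off-DiagonalB} applied with ${\bf a}=(\xi,1)$ and ${\bf b}=(c,0)$ (noting that $f_{\mathfrak{i},x}$ is independent of the second coordinate) yields
\[
\frac{|g_{\mathfrak{i},x}(\xi,1)|}{f_{\mathfrak{i},x}(c)}\leq C,
\]
where $C$ is the constant from \eqref{ratio1}. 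Setting $L:=C$ gives the desired bound, uniformly in $\mathfrak{i}$ and in $a,b$.

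Since the key estimate has already been built into Lemma~\ref{Off-DiagonalB}, no real obstacle remains; the lemma is essentially a geometric rephrasing of \eqref{ratio1}, interpreting the bounded ratio $|g_{\mathfrak{i},x}|/f_{\mathfrak{i},x}$ as a uniform bound on the slope of the top edge of the image rectangle $S_\mathfrak{i}([0,1]^2)$ relative to its base.
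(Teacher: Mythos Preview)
Your proof is correct and follows essentially the same approach as the paper: apply the one-variable mean value theorem separately to the numerator and denominator to reduce the ratio to $|g_{\mathfrak{i},x}|/f_{\mathfrak{i},x}$ at two (possibly different) points, then invoke Lemma~\ref{Off-DiagonalB}. Your version is slightly more explicit about why $f_{\mathfrak{i},x}(c)>0$ and about which points of $[0,1]^2$ are being fed into \eqref{ratio1}, but the argument is the same.
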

noting that $f_{\mathfrak{i}}$ depends only on the first coordinate of its argument.
\begin{proof}
By the mean value theorem there exist $c_1, c_2\in (a,b)$ such that
\[
\frac{|g_{\mathfrak{i}}(b, 1)-g_{\mathfrak{i}}(a,1)|}{|f_{\mathfrak{i}}(b)-f_{\mathfrak{i}}(a)|}=\frac{|g_{\mathfrak{i},x}(c_1, 1)||b-a|}{|f_{\mathfrak{i},x}(c_2)||b-a|}=\frac{|g_{\mathfrak{i},x}(c_1, 1)|}{|f_{\mathfrak{i},x}(c_2)|} \lesssim 1
\]
by Lemma \ref{Off-DiagonalB}.
\end{proof}


The standard inequalities, that if $k\in\mathbb{N}, a_1,\dots, a_k\geq 0$ and $q\geq 0$ then
\begin{equation}\label{Moment Sums Lemma1}
\left(\sum_{i=1}^k a_i\right)^q\asymp_{k, q}\sum_{i=1}^k a_i^q,
\end{equation}
 will be helpful when manipulating moment sums.

Recall from \eqref {momsum} that $\mathcal{D}_{\delta}^q$  denotes the $q$-th power moment sum of a measure over the $\delta$-mesh cubes $\mathcal{D}_{\delta}$. Our next result compares the moment sums of $\mu_{\mathfrak{i}}=p(\mathfrak{i})\mu\circ S_{\mathfrak{i}}^{-1}$ on $S_{\mathfrak{i}}(F)$ with moment sums of the projection of $\mu$ onto the horizontal axis. This is analogous to \cite[Lemma 7.2]{Fraser} but in the nonlinear case more care is needed. 

\vspace{5mm}

\begin{lem}\label{Moment Sums Lemma2}
For each $q\geq 0$ and  ${\bf a}\in[0, 1]^2$,  there exist numbers $\widehat{A}, \widehat{B}>0$ such that if we write
\begin{equation}\label{A Hat}
\widehat{A}_{\mathfrak{i},\delta}=\frac{\widehat{A}\delta}{\alpha_1(D_{{\bf a}}S_{\mathfrak{i}})}
\qquad {\text and } \qquad
\widehat{B}_{\mathfrak{i},\delta}=\frac{\widehat{B}\delta}{\alpha_1(D_{{\bf a}}S_{\mathfrak{i}})}
\end{equation}
for $\delta\in (0, 1]$ and $\mathfrak{i}\in\mathcal{I}_{{\bf a},\delta}$ 
then
\begin{equation}\label{hatcomp}
\mathcal{D}^q_{\widehat{B}_{\mathfrak{i},\delta}}(p(\mathfrak{i})\pi\mu)\ \lesssim\ \mathcal{D}^q_{\delta}(\mu_{\mathfrak{i}})\ \lesssim\  \mathcal{D}^q_{\widehat{A}_{\mathfrak{i},\delta}}(p(\mathfrak{i})\pi\mu).
\end{equation}

\end{lem}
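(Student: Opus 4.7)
The plan is to exploit the ``thin strip'' geometry of $S_{\mathfrak{i}}([0,1]^2)$ forced by the stopping condition. Since $\mathfrak{i}\in\mathcal{I}_{{\bf a},\delta}$ we have $\alpha_2(D_{{\bf a}}S_{\mathfrak{i}})<\delta$, so the image $S_{\mathfrak{i}}([0,1]^2)$ has vertical thickness smaller than $\delta$ at every horizontal position. Group the $\delta$-mesh cubes into vertical $\delta$-columns $C=[x_0,x_0+\delta]\times\mathbb{R}$. Using Lemma \ref{Slope Ratio}, the top and bottom curves of $S_{\mathfrak{i}}([0,1]^2)$ have horizontal slopes bounded uniformly by $L$, so within any vertical strip of width $\delta$ the image has vertical extent at most $\alpha_2(D_{{\bf a}}S_{\mathfrak{i}})+L\delta \lesssim \delta$. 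Consequently there is a uniform bound $N$ on the number of $\delta$-mesh cubes per column meeting $F_{\mathfrak{i}}$, and by \eqref{Moment Sums Lemma1}
\[
\mathcal{D}^q_{\delta}(\mu_{\mathfrak{i}})\ \asymp_{q}\ \sum_{C}\mu_{\mathfrak{i}}(C)^q,
\]
where the sum is over those $\delta$-columns of positive mass.

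Next I would express each column measure as a $\pi(\mu)$-measure of a preimage interval. Since $f_{\mathfrak{i}}$ depends only on the first coordinate,
\[
\mu_{\mathfrak{i}}(C)\ =\ p(\mathfrak{i})\,\mu\bigl(S_{\mathfrak{i}}^{-1}(C)\bigr)\ =\ p(\mathfrak{i})\,\mu\bigl(J_{x_0}\times [0,1]\bigr)\ =\ p(\mathfrak{i})\,\pi(\mu)(J_{x_0}),
\]
where $J_{x_0}:=f_{\mathfrak{i}}^{-1}([x_0,x_0+\delta])\subseteq [0,1]$. By the mean value theorem applied to $f_{\mathfrak{i}}$, together with Lemmas \ref{New Lemma} and \ref{Singular Value Bound},
\[
|J_{x_0}|\ \asymp\ \frac{\delta}{f_{\mathfrak{i},x}({\bf a})}\ \asymp\ \frac{\delta}{\alpha_1(D_{{\bf a}}S_{\mathfrak{i}})},
\]
so the constants $\widehat{A},\widehat{B}$ in \eqref{A Hat} can be chosen so that $\widehat{B}_{\mathfrak{i},\delta}\leq |J_{x_0}|\leq \widehat{A}_{\mathfrak{i},\delta}$ uniformly in $\mathfrak{i}$, $\delta$ and $x_0$. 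Since the $J_{x_0}$ corresponding to distinct $\delta$-mesh columns tile $[0,1]$ (up to endpoints), we are reduced to comparing $p(\mathfrak{i})^q\sum_{x_0}\pi(\mu)(J_{x_0})^q$ with the mesh sums $\mathcal{D}^q_{\widehat{A}_{\mathfrak{i},\delta}}(\pi(\mu))$ and $\mathcal{D}^q_{\widehat{B}_{\mathfrak{i},\delta}}(\pi(\mu))$.

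Both comparisons are now routine. For the upper bound, each $J_{x_0}$ of length at most $\widehat{A}_{\mathfrak{i},\delta}$ lies in the union of at most two adjacent $\widehat{A}_{\mathfrak{i},\delta}$-mesh intervals, giving $\pi(\mu)(J_{x_0})^q\lesssim_{q}\pi(\mu)(I_1)^q+\pi(\mu)(I_2)^q$; and since each $\widehat{A}_{\mathfrak{i},\delta}$-mesh interval $I$ meets only $O(\widehat{A}/\widehat{B})$ of the $J_{x_0}$'s (they have length $\geq\widehat{B}_{\mathfrak{i},\delta}$ and are disjoint), summing gives $\sum_{x_0}\pi(\mu)(J_{x_0})^q\lesssim_{q}\mathcal{D}^q_{\widehat{A}_{\mathfrak{i},\delta}}(\pi(\mu))$. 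For the lower bound, the roles are reversed: each $\widehat{B}_{\mathfrak{i},\delta}$-mesh interval has length at most $|J_{x_0}|$, hence meets at most two consecutive $J_{x_0}$'s, yielding $\pi(\mu)(I)^q\lesssim_q \pi(\mu)(J_{x_0,1})^q+\pi(\mu)(J_{x_0,2})^q$, and each $J_{x_0}$ is hit by $O(\widehat{A}/\widehat{B})$ of the $\widehat{B}$-mesh intervals. Multiplying the resulting comparison by $p(\mathfrak{i})^q$ and absorbing it via $\mathcal{D}^q_r(p(\mathfrak{i})\pi(\mu))=p(\mathfrak{i})^q\,\mathcal{D}^q_r(\pi(\mu))$ yields \eqref{hatcomp}.

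The main obstacle is the first step: the uniform bound on the number of $\delta$-mesh cubes per column carrying positive $\mu_{\mathfrak{i}}$-mass. Unlike the self-affine case where $S_{\mathfrak{i}}([0,1]^2)$ is a true parallelogram, here it is a curved region, so the control of its vertical extent within a $\delta$-wide vertical strip must combine the $\alpha_2<\delta$ bound from the stopping time with the uniform slope bound of Lemma \ref{Slope Ratio} (whose proof in turn relies on Lemma \ref{Off-DiagonalB}, and so ultimately on the domination condition). Once this geometric reduction is in place, the remaining estimates are essentially a bookkeeping exercise with the mean value theorem and bounded distortion.
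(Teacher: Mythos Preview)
Your argument is correct and follows essentially the same route as the paper: bound the number of $\delta$-mesh cubes per vertical column via Lemma~\ref{Slope Ratio} and the stopping condition, reduce to a column (i.e.\ projected) moment sum, use the mean value theorem with bounded distortion to see that the preimage intervals $J_{x_0}=f_{\mathfrak{i}}^{-1}([x_0,x_0+\delta])$ have length $\asymp \delta/\alpha_1(D_{\bf a}S_{\mathfrak{i}})$, and then compare with the $\widehat{A}_{\mathfrak{i},\delta}$- and $\widehat{B}_{\mathfrak{i},\delta}$-mesh sums. The only small point the paper handles explicitly that you gloss over is that the two edge columns can give $|J_{x_0}|<\widehat{B}_{\mathfrak{i},\delta}$; the paper ``glues'' these to the adjacent interval, which keeps the length comparable and leaves the comparison unchanged.
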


\begin{proof}
As $\mathfrak{i}\in\mathcal{I}_{\textbf{a},\delta}$ we have $\alpha_2(D_{\textbf{a}} S_{\mathfrak{i}})<\delta$. We shall show that there are at most a constant number $k$ squares of the $\delta$-mesh that intersect $S_{\mathfrak{i}}([0, 1]^2)\supseteq \textnormal{supp}\mu_{\mathfrak{i}}$ in any vertical column of mesh squares.


For this we estimate the height of the intersection of $S_{\mathfrak{i}}([0, 1]^2)$ with a given vertical strip of width $\delta$. Note that for any such vertical strip there exists some $0\leq a < b\leq 1$ such that
\begin{equation}\label{Delta Bound}
|f_{\mathfrak{i}}(b)-f_{\mathfrak{i}}(a)|=\delta
\end{equation}
apart from at most two vertical strips (at the left and right ends of $S_{\mathfrak{i}}([0, 1]^2)$) for which 
\begin{equation}\label{Delta Bound1}
|f_{\mathfrak{i}}(b)-f_{\mathfrak{i}}(a)|\leq\delta
\end{equation}
with one of $a$ or $b$ equal to either 0 or 1. Then, for $a', b' \in [a,b]$,
\begin{eqnarray}
|g_{\mathfrak{i}}(b', 1)-g_{\mathfrak{i}}(a', 0)|
&\leq& |g_{\mathfrak{i}}(b', 1)-g_{\mathfrak{i}}(a', 1)|+|g_{\mathfrak{i}}(a', 1)-g_{\mathfrak{i}}(a', 0)|\label{giest}\\
&\leq&  L|f_{\mathfrak{i}}(b')-f_{\mathfrak{i}}(a')| + |g_{\mathfrak{i}, y}(a',c)| \nonumber\\
&\lesssim& L\delta +  \alpha_2(D_{(a',c)} S_{\mathfrak{i}})\nonumber \\
&\lesssim& \delta,\nonumber
\end{eqnarray}
where we have estimated the first term of \eqref{giest} using Lemma \ref{Slope Ratio} and (\ref{Delta Bound})-(\ref{Delta Bound1})  and the second term using the mean value theorem with $c\in (0, 1)$ followed by  \eqref{alpha2}, \eqref{Bounded Distortion} and \eqref{Alpha Min Bound}.

\begin{figure}[H]
  \centering
\includegraphics[width=6cm]{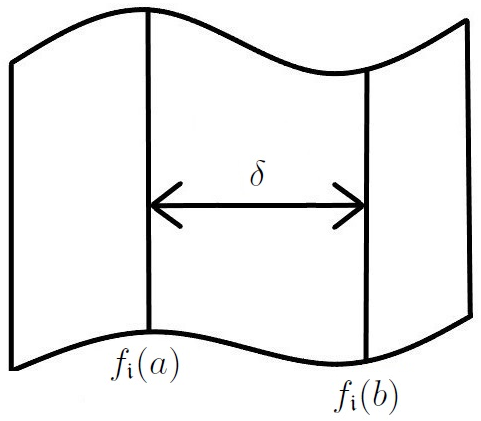}
\caption{$S_{\mathfrak{i}}([0, 1]^2)$ together with two points $f_{\mathfrak{i}}(a)$ and $f_{\mathfrak{i}}(b)$ which together form a vertical strip of width $\delta$.} 
\end{figure}

We have shown that the height of the intersection of $S_{\mathfrak{i}}([0, 1]^2)$ with every vertical strip with base length $\delta$ is at most $k' \delta$, where $k'$ is independent of $\mathfrak{i}$. Thus at most $k=\lceil  k' \rceil +1$ squares in any column of the $\delta$-mesh intersect $S_{\mathfrak{i}}([0, 1]^2)$
so using (\ref{Moment Sums Lemma1}) $\mathcal{D}^q_{\delta}(\mu_{\mathfrak{i}})\asymp \mathcal{D}^q_{\delta}(\pi\mu_{\mathfrak{i}})$ where $\pi\mu_{\mathfrak{i}}$ is the projection of $\mu_{\mathfrak{i}}$ onto the $x$-axis. In terms of the projection of  pre-images of the intersection of $\delta$-mesh cubes $Q$ with $S_{\mathfrak{i}}([0, 1]^2)$,
\begin{align}
\mathcal{D}^q_{\delta}(\mu_{\mathfrak{i}})&=\sum_{Q\in\mathcal{D}_{\delta}}\mu_{\mathfrak{i}}\left(Q\cap S_{\mathfrak{i}}([0, 1]^2)\right)^q\nonumber\\
&= p(\mathfrak{i})^q\sum_{Q\in\mathcal{D}_{\delta}}\mu\left(S_{\mathfrak{i}}^{-1}\left(Q\cap S_{\mathfrak{i}}([0, 1]^2)\right)\right)^q\nonumber\\
&\asymp p(\mathfrak{i})^q\sum_{Q\in\mathcal{D}_{\delta}}\pi\mu\left(\pi S_{\mathfrak{i}}^{-1}\left(Q\cap S_{\mathfrak{i}}([0, 1]^2)\right)\right)^q.\label{Moment Sum Calculation}
\end{align}

If $Q$ is a  $\delta$-mesh cube that intersects $S_{\mathfrak{i}}([0, 1]^2)$ other than one overlapping its left or right edge,  then $\pi S_{\mathfrak{i}}^{-1}\left(Q\cap S_{\mathfrak{i}}([0, 1]^2)\right)$  is an interval in $\mathbb{R}$ of length $\delta$. Writing $\hat{a}, \hat{b}$ for the endpoints of  $\pi S_{\mathfrak{i}}^{-1}\left(Q\cap S_{\mathfrak{i}}([0, 1]^2)\right)$ then  
using the mean value theorem and \eqref{alpha1},
\[
|\hat{a}-\hat{b}|\ =\ \frac{|f_{\mathfrak{i}}(\hat{a})-f_{\mathfrak{i}}(\hat{b})|}{|f_{\mathfrak{i},x} (\hat{c})|}\ = \ \frac{\delta}{|f_{\mathfrak{i},x} (\hat{c})|}\ \asymp\ \frac{\delta}{\alpha_1(D_{\textbf{a}}S_{\mathfrak{i}})},
\]
where $\hat{c}\in[0, 1]$. 


If $Q$ is one of the two cubes at the left or right end of $S_{\mathfrak{i}}([0, 1]^2)$ then we simply ``glue'' $\pi S_{\mathfrak{i}}^{-1}\left(Q\cap S_{\mathfrak{i}}([0, 1]^2)\right)$ to the adjacent interval (which will be on the right or left respectively). This will create a new interval which also has length comparable to $\delta/\alpha_1(D_{\textbf{a}}S_{\mathfrak{i}})$. 

Every projection of a  pre-image  $\pi S_{\mathfrak{i}}^{-1}\left(Q\cap S_{\mathfrak{i}}([0, 1]^2)\right)$ can be covered by an interval of length $\widehat{A}\delta/\alpha_1(D_{\textbf{a}}S_{\mathfrak{i}})$ and contains an interval of length $\widehat{B}\delta/\alpha_1(D_{\textbf{a}}S_{\mathfrak{i}})$, for some constants $\widehat{A} \geq \widehat{B}>0$. Recall the definitions \eqref{A Hat} of $\widehat{A}_{\mathfrak{i},\delta}$ and $\widehat{B}_{\mathfrak{i},\delta}$ and write $\mathcal{J}_{\delta}$ for the $\delta$-mesh on $\mathbb{R}$ centred at the origin. From (\ref{Moment Sum Calculation}), noting that each 
$J\in\mathcal{J}_{\widehat{A}_{\mathfrak{i},\delta}}$ can intersect $\pi S_{\mathfrak{i}}^{-1}\left(Q\cap S_{\mathfrak{i}}([0, 1]^2)\right)$ for at most $k\big\lceil \widehat{A}/\widehat{B}+1 \big\rceil$ many $Q\in\mathcal{D}_{\delta}$, and using \eqref{Moment Sums Lemma1},
\begin{align*}
\mathcal{D}^q_{\delta}(\mu_{\mathfrak{i}})&\asymp  p(\mathfrak{i})^q\sum_{Q\in\mathcal{D}_{\delta}}\pi\mu\left(\pi S_{\mathfrak{i}}^{-1}\left(Q\cap S_{\mathfrak{i}}([0, 1]^2)\right)\right)^q\\
&\lesssim p(\mathfrak{i})^q\sum_{J\in\mathcal{J}_{\widehat{A}_{\mathfrak{i},\delta}}}\pi\mu(J)^q\\
&=\sum_{J\in\mathcal{J}_{\widehat{A}_{\mathfrak{i},\delta}}}(p(\mathfrak{i})\pi\mu(J))^q\\
&=\mathcal{D}^q_{\widehat{A}_{\mathfrak{i},\delta}}(p(\mathfrak{i})\pi\mu).
\end{align*}
Similarly, each $\pi S_{\mathfrak{i}}^{-1}\left(Q\cap S_{\mathfrak{i}}([0, 1]^2)\right)$     intersects  at most $\big\lceil \widehat{A}/\widehat{B}+1 \big\rceil $ intervals  $J\in\mathcal{J}_{\widehat{B}_{\mathfrak{i},\delta}}$, and each interval $J\in\mathcal{J}_{\widehat{B}_{\mathfrak{i},\delta}}$  intersects $\pi S_{\mathfrak{i}}^{-1}\left(Q\cap S_{\mathfrak{i}}([0, 1]^2)\right)$ for at most $2k$  sets $Q\in\mathcal{D}_{\delta}$, so
\begin{align*}
\mathcal{D}^q_{\delta}(\mu_{\mathfrak{i}})&\asymp  p(\mathfrak{i})^q\sum_{Q\in\mathcal{D}_{\delta}}\pi\mu\left(\pi S_{\mathfrak{i}}^{-1}\left(Q\cap S_{\mathfrak{i}}([0, 1]^2)\right)\right)^q\\
&\gtrsim p(\mathfrak{i})^q\sum_{J\in\mathcal{J}_{\widehat{B}_{\mathfrak{i},\delta}}}\pi\mu(J)^q\\
&=\sum_{J\in\mathcal{J}_{\widehat{B}_{\mathfrak{i},\delta}}}(p(\mathfrak{i})\pi\mu(J))^q\\
&=\mathcal{D}^q_{\widehat{B}_{\mathfrak{i},\delta}}(p(\mathfrak{i})\pi\mu),
\end{align*}
giving the result.
\end{proof}

Notice that a simple consequence of the Definition \ref{lq} of the $L^q$-spectrum is that for all $\varepsilon>0, q\geq 0, p>0$ and $0< \delta \leq 1$
\begin{equation}\label{Moment Sum Bound}
p^q\delta^{-\beta(q)+\varepsilon/2}\lesssim_{\varepsilon,q}\mathcal{D}^q_{\delta}(p\pi\mu)\lesssim_{\varepsilon,q}p^q\delta^{-\beta(q)-\varepsilon/2}
\end{equation}
We now turn to proving our main result, Theorem \ref{Main Theorem}.

\begin{proof}[Proof of Theorem \ref{Main Theorem}]
The first two parts of this proof follow Fraser's proof of \cite[Theorem 2.6]{Fraser} but we reproduce it here due to its centrality to our result. 

{\it Part (1)}. Let $q\in [0, 1]$ and let $\delta\in (0, 1]$ and  $\textbf{a}\in[0, 1]^2$. It is sufficient to show that $\overline{\tau}_{\mu}(q)\leq \gamma(q)$. As $q\in [0, 1]$,
\begin{align*}
\mathcal{D}^q_{\delta}(\mu)=\sum_{Q\in\mathcal{D}_{\delta}}\mu(Q)^q=\sum_{Q\in\mathcal{D}_{\delta}}\Big(\sum_{\mathfrak{i}\in\mathcal{I}_{\delta}}\mu_{\mathfrak{i}}(Q)\Big)^q\leq \sum_{Q\in\mathcal{D}_{\delta}}\sum_{\mathfrak{i}\in\mathcal{I}_{\delta}}\mu_{\mathfrak{i}}(Q)^q
&=\sum_{\mathfrak{i}\in\mathcal{I}_{\delta}}\sum_{Q\in\mathcal{D}_{\delta}}\mu_{\mathfrak{i}}(Q)^q \\ \\
&=\sum_{\mathfrak{i}\in\mathcal{I}_{\delta}}\mathcal{D}^q_{\delta}(\mu_{\mathfrak{i}}).
\end{align*}

 Thus for all $\varepsilon>0$,
\begin{align*}
\delta^{\gamma(q)+\varepsilon}\mathcal{D}_{\delta}^q(\mu)\ &\leq\ \delta^{\gamma(q)+\varepsilon}\sum_{\mathfrak{i}\in\mathcal{I}_{\delta}}\mathcal{D}^q_{\delta}(\mu_{\mathfrak{i}})&\\
&\lesssim\  \delta^{\gamma(q)+\varepsilon}\sum_{\mathfrak{i}\in\mathcal{I}_{\delta}}\mathcal{D}^q_{\widehat{A}_{\mathfrak{i},\delta}}(p(\mathfrak{i})\pi_{\mathfrak{i}}\mu)&\textnormal{by \eqref{hatcomp}}\\
&\lesssim_{\varepsilon,q}\ \delta^{\gamma(q)+\varepsilon}\sum_{\mathfrak{i}\in\mathcal{I}_{\delta}}p(\mathfrak{i})^q\left(\frac{\widehat{A}\delta}{\alpha_1(D_{\textbf{a}} S_{\mathfrak{i}})}\right)^{-\beta(q)-\varepsilon/2}&\textnormal{by \eqref{Moment Sum Bound}}\\
&\lesssim_{\varepsilon, q} \ \sum_{\mathfrak{i}\in\mathcal{I}_{\delta}}p(\mathfrak{i})^q\alpha_1(D_{\textbf{a}} S_{\mathfrak{i}})^{\beta(q)+\varepsilon/2}\alpha_2(D_{\textbf{a}} S_{\mathfrak{i}})^{\gamma(q)+\varepsilon-\beta(q)-\varepsilon/2}&\textnormal{by \eqref{Alpha Min Bound}}\\
&=\ \ \ \sum_{\mathfrak{i}\in\mathcal{I}_{\delta}}\psi^{\gamma(q)+\varepsilon, q}_{\textbf{a}}(\mathfrak{i})&\textnormal{by \eqref{qmodsv}}\\
&\lesssim_{\varepsilon, q}1.&\textnormal{by Lemma \ref{Sum Lemma}}
\end{align*}
So $\overline{\tau}_{\mu}(q)\leq \gamma(q)+\varepsilon$ by \eqref{lq1}, giving (1) on letting $\varepsilon\to 0$ .

{\it Part (2)}. We suppose $q\geq 1$ and as before let $\delta\in (0, 1]$ and  $\textbf{a}\in[0, 1]^2$. It is sufficient to show that $\underline{\tau}_{\mu}(q)\geq \gamma(q)$. As $q\geq 1$, 
\begin{align*}
\mathcal{D}^q_{\delta}(\mu)=\sum_{Q\in\mathcal{D}_{\delta}}\mu(Q)^q=\sum_{Q\in\mathcal{D}_{\delta}}\Big(\sum_{\mathfrak{i}\in\mathcal{I}_{\delta}}\mu_{\mathfrak{i}}(Q)\Big)^q\geq \sum_{Q\in\mathcal{D}_{\delta}}\sum_{\mathfrak{i}\in\mathcal{I}_{\delta}}\mu_{\mathfrak{i}}(Q)^q&=\sum_{\mathfrak{i}\in\mathcal{I}_{\delta}}\sum_{Q\in\mathcal{D}_{\delta}}\mu_{\mathfrak{i}}(Q)^q
\\ \\
&=\sum_{\mathfrak{i}\in\mathcal{I}_{\delta}}\mathcal{D}^q_{\delta}(\mu_{\mathfrak{i}}).
\end{align*}
Thus for all $\varepsilon>0$,
\begin{align*}
\delta^{\gamma(q)-\varepsilon}\mathcal{D}_{\delta}^q(\mu)\ &\geq\ \delta^{\gamma(q)-\varepsilon}\sum_{\mathfrak{i}\in\mathcal{I}_{\delta}}\mathcal{D}^q_{\delta}(\mu_{\mathfrak{i}})&\\
&\gtrsim\  \delta^{\gamma(q)-\varepsilon}\sum_{\mathfrak{i}\in\mathcal{I}_{\delta}}\mathcal{D}^q_{\widehat{B}_{\mathfrak{i},\delta}}(p(\mathfrak{i})\pi_{\mathfrak{i}}\mu)&\textnormal{by \eqref{hatcomp}}\\
&\gtrsim_{\varepsilon,q}\ \delta^{\gamma(q)-\varepsilon}\sum_{\mathfrak{i}\in\mathcal{I}_{\delta}}p(\mathfrak{i})^q\left(\frac{\widehat{B}\delta}{\alpha_1(D_{\textbf{a}} S_{\mathfrak{i}})}\right)^{-\beta(q)+\varepsilon/2}&\textnormal{by \eqref{Moment Sum Bound}}\\
&\gtrsim_{\varepsilon, q} \ \sum_{\mathfrak{i}\in\mathcal{I}_{\delta}}p(\mathfrak{i})^q\alpha_1(D_{\textbf{a}} S_{\mathfrak{i}})^{\beta(q)-\varepsilon/2}\alpha_2(D_{\textbf{a}} S_{\mathfrak{i}})^{\gamma(q)-\varepsilon-\beta(q)+\varepsilon/2}&\textnormal{by \eqref{Alpha Min Bound}}\\
&=\ \ \ \sum_{\mathfrak{i}\in\mathcal{I}_{\delta}}\psi^{\gamma(q)-\varepsilon, q}_{\textbf{a}}(\mathfrak{i})&\textnormal{by \eqref{qmodsv}}\\
&\gtrsim_{\varepsilon, q} 1.&\textnormal{by Lemma \ref{Sum Lemma}}
\end{align*}
So $\underline{\tau}_{\mu}(q)\geq \gamma(q)-\varepsilon$ giving (2) on letting $\varepsilon\to0$.

{\it Part (3)}
We now assume $\mu$ satisfies the ROSC. Due to Parts (1) and  (2) we only need to provide an upper bound when $q>1$ and a lower bound when $q<1$.

We begin by considering the case when $q>1$. For (1) we obtained an upper bound when $q\in [0, 1]$, but the only place in the proof where we used the assumption $q\leq 1$ was
\[
\mathcal{D}^q_{\delta}(\mu)\leq\sum_{\mathfrak{i}\in\mathcal{I}_{\delta}}\mathcal{D}^q_{\delta}(\mu_{\mathfrak{i}}).
\]
Thus for $q>1$ we shall use the ROSC to show that
\[
\mathcal{D}^q_{\delta}(\mu)\lesssim\sum_{\mathfrak{i}\in\mathcal{I}_{\delta}}\mathcal{D}^q_{\delta}(\mu_{\mathfrak{i}}).
\]
It follows from H\"{o}lder's inequality that for $Q\in\mathcal{D}_{\delta}$
\[
\Big(\sum_{\mathfrak{i}\in\mathcal{I}_{\delta}}\mu_{\mathfrak{i}}(Q)\Big)^q\leq\ k^{q-1}\sum_{\mathfrak{i}\in\mathcal{I}_{\delta}}\mu_{\mathfrak{i}}(Q)^q
\]
where
\begin{equation}\label{k definition}
k:=|\{ \mathfrak{i}\in\mathcal{I}_{\delta}:\mu_{\mathfrak{i}}(Q)>0\}|.
\end{equation}
To complete the proof we need to bound $k$ uniformly for all $\delta$ and $Q\in \mathcal{D}_{\delta}$. Fix $\delta\in (0, 1]$ and $Q\in \mathcal{D}_{\delta}$ such that $\mu(Q)>0$. Let $R$ denote the open unit square $(0, 1)^2$. For convenience if $A>0$ then we write $AQ$ to denote the cube with the same centre as $Q$ but with sidelength $A\delta$.

Let $\mathfrak{i}\in\mathcal{I}_{\delta}$  be such that $S_{\mathfrak{i}}(R)\cap Q$ is non-empty (such an $\mathfrak{i}$ must exist as by assumption $\mu(Q)>0$). Let $\textbf{a}\in S_{\mathfrak{i}}(R)\cap Q$ and consider the vertical ``slice'' of $S_{\mathfrak{i}}(R)$ that contains $\textbf{a}$. By \eqref{Alpha Min Bound} and Lemma \ref{Singular Value Bound}, $g_{\mathfrak{i}, y}(\textbf{a})\asymp\alpha_2(D_{\textbf{a}}S_{\mathfrak{i}})\asymp \delta$. Together with the mean value theorem this implies that the height of this vertical slice is comparable to $\delta$, say it is bounded above by $M\delta$ for some $M>1$ which is independent of $\delta$. 
\begin{figure}[H]
  \centering
\includegraphics[width=7cm]{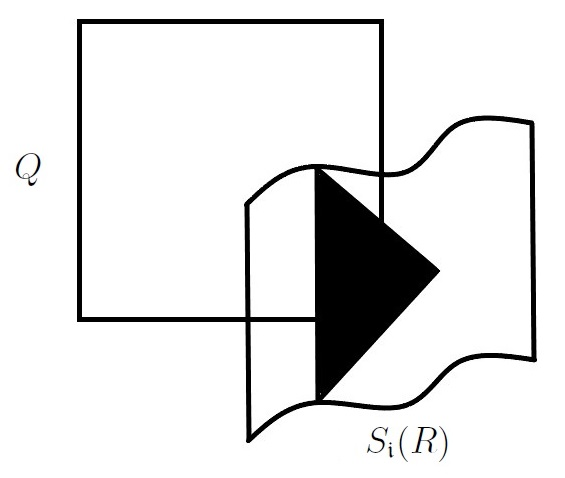}
\caption{$S_{\mathfrak{i}}(R)$ and $Q$, together with the triangle $\Delta_{\mathfrak{i}}$ contained in $S_{\mathfrak{i}}(R)$.} 
\end{figure}

Lemma \ref{Slope Ratio} implies that if we draw a line of slope $L$ (where we can assume $L>1$) from the base of the vertical slice in both directions and a line of slope $-L$ from the top of the vertical slice in both directions then of the two isosceles triangles formed by these lines and the vertical slice at least one must lie within $S_{\mathfrak{i}}(R)$. As the length of the vertical slice is comparable to $\delta$ the area of this triangle is comparable to $\delta^2$. We write $\Delta_{\mathfrak{i}}$ for the triangle which is contained in $S_{\mathfrak{i}}(R)$.

Each triangle $\Delta_{\mathfrak{i}}$ (associated with $\mathfrak{i}\in\mathcal{I}_{\delta}$ such that $S_{\mathfrak{i}}(R)\cap Q \neq \emptyset$) is contained in the square which has the same centre as $Q$ and sidelength $3M\delta$, i.e. the square $3MQ$. Let $\mathcal{L}$ denote two-dimensional Lebesgue measure.
As the area of each $\Delta_{\mathfrak{i}}$ is comparable to $\delta$ and the ROSC guarantees that the interiors of the $\Delta_{\mathfrak{i}}$ are pairwise disjoint,  it follows from \eqref{k definition} that
\begin{equation*}
k\delta^2\lesssim\sum_{\substack{\mathfrak{i}\in\mathcal{I}_{\delta}: \\ S_{\mathfrak{i}}(R)\cap Q \neq \emptyset}}\mathcal{L} (\Delta_{\mathfrak{i}})\leq (3M\delta)^2= 9 M^2 \delta^2.
\end{equation*}
Hence
$k\lesssim 1$ completing the proof of the upper bound for $q>1$.

When $0\leq q<1$  a similar approach to the $q>1$ case above establishes that
\[
\mathcal{D}^q_{\delta}(\mu)\gtrsim\sum_{\mathfrak{i}\in\mathcal{I}_{\delta}}\mathcal{D}^q_{\delta}(\mu_{\mathfrak{i}}).
\]
We omit the proof which is very similar.
\end{proof}

\begin{samepage}

\subsection*{Acknowledgements}

KJF and JMF  were   supported by an \emph{EPSRC Standard Grant} (EP/R015104/1).  
JMF  was  also  supported by   a  \emph{Leverhulme Trust Research Project Grant} (RPG-2019-034). LDL was supported by an \emph{EPSRC Doctoral Training Grant}.

\end{samepage}

\vspace{5mm}
Kenneth J. Falconer, School of Mathematics \& Statistics, University of St Andrews, St Andrews, KY16 9SS, UK
\textit{E-mail address}:\ \url{kjf@st-andrews.ac.uk}

Jonathan M. Fraser, School of Mathematics \& Statistics, University of St Andrews, St Andrews, KY16 9SS, UK
\textit{E-mail address}:\ \url{jmf32@st-andrews.ac.uk}

Lawrence D. Lee, School of Mathematics \& Statistics, University of St Andrews, St Andrews, KY16 9SS, UK
\textit{E-mail address}:\ \url{ldl@st-andrews.ac.uk}

\end{document}